\newtheorem{lemma}{Lemma}
\newtheorem{theorem}{Theorem}
\newtheorem{prop}{Proposition}
\newtheorem{corollary}{Corollary}
\newtheorem{remark}{Remark}
\numberwithin{lemma}{section}
\numberwithin{prop}{section}
\theoremstyle{definition}
\newtheorem{defn}{Definition}
\numberwithin{remark}{section}
\numberwithin{defn}{section}
\newcommand{\R}{\mathbb{R}}
\newcommand{\Z}{\mathbb{Z}}
\newcommand{\N}{\mathbb{N}}
\newcommand{\D}{\mathcal{D}}
\newcommand{\B}{\mathcal{B}}
\newcommand{\G}{\mathcal{G}}
\DeclareMathOperator{\diam}{diam}
\DeclareMathOperator{\dimH}{dim_{H}}
\DeclareMathOperator{\Cdim}{dim_{C}}
\DeclareMathOperator{\closure}{closure}
\title[On the Diophantine properties of $\lambda$-expansions]{On the Diophantine properties of $\boldsymbol{\lambda}$-expansions}
\author{Tomas Persson}
\address{Tomas Persson\\ Centre for Mathematical Sciences\\ Lund
  University\\ Box 118\\ 22100 Lund\\ Sweden}
\email{tomasp@maths.lth.se}
\author{Henry WJ Reeve}
\address{Henry WJ Reeve\\Department of Mathematics\\ The University of
  Bristol\\ University Walk\\Clifton\\ Bristol\\BS8 1TW\\UK}
\email{henrywjreeve@gmail.com}
\begin{document}

\begin{abstract}
For $\lambda \in (\frac{1}{2}, 1)$ and $\alpha$, we consider sets of
numbers $x$ such that for infinitely many $n$, $x$ is $2^{-\alpha
  n}$-close to some $\sum_{i=1}^n \omega_i \lambda^i$, where $\omega_i
\in \{0,1\}$. These sets are in Falconer's intersection classes for
Hausdorff dimension $s$ for some $s$ such that $- \frac{1}{\alpha}
\frac{\log \lambda}{\log 2} \leq s \leq \frac{1}{\alpha}$. We show
that for almost all $\lambda \in (\frac{1}{2}, \frac{2}{3})$, the
upper bound of $s$ is optimal, but for a countable infinity of values
of $\lambda$ the lower bound is the best possible result.
\end{abstract}

\subjclass[2010]{11J83, 28A78}

\maketitle

\section{Introduction}

Diophantine approximation deals with the approximation of real numbers
by rationals. A classic example is the set $J(\alpha)$ of all
$\alpha$-well approximable numbers,
\[
  J(\alpha) = \{\, x \in \R : |x - p/q| < q^{-\alpha} \text{ for
    infinitely many } (p,q) \in \Z \times \N \,\}.
\]
Dirichlet showed that $J(\alpha) = \R$ for $\alpha = 2$ and Jarn\'{i}k
\cite{jarnik} and Besicovitch \cite{besicovitch} showed that the
Hausdorff dimension of $K(\alpha)$ is $2/\alpha$ for all $\alpha\geq
2$.

The sets $J(\alpha)$ belong to a family of sets with an interesting
large intersection property, first introduced by Falconer in
\cite{falconerold, falconernew}.  Falconer defined classes
$\mathcal{G}^s$ of $G_{\delta}$ subsets of $\R^n$ with the property
that any set in $\mathcal{G}^s$ has Hausdorff dimension at least $s$,
and any countable intersection of bi-Lipschitz images of sets from
$\mathcal{G}^s$, also belongs to $\mathcal{G}^s$. There are several
equivalent ways to characterise the sets in $\mathcal{G}^s$ (see
\cite{falconernew}).

Falconer showed that the set $J (\alpha)$ is in the class
$\mathcal{G}^{2/\alpha}$ \cite{falconerold}. This implies that any
countable intersection of $J (\alpha)$ with sets from
$\mathcal{G}^{2/\alpha}$ has Hausdorff dimension $2/\alpha$.

Real numbers are typically represented by some imperfect truncation of
their expansion to some given integer base. This motivates the
classification of numbers according to the accuracy of their finite
expansions by considering sets of the form,
\begin{multline*}
\B(\alpha) = \{ \, x \in \R : |x - p/2^n| < 2^{-\alpha n}
\text{ for infinitely many } (p,n) \in \Z \times \N \,\}.
\end{multline*}
For each $\alpha$ the set $\B (\alpha)$ is of Hausdorff dimension
$1/\alpha$. Moreover, each $\B(\alpha)$ belongs to the class
$\mathcal{G}^{1/\alpha}$. We note that $\B(\alpha)=\D(\alpha)+\Z$
where,
\begin{multline*}
\D(\alpha) = \biggl\{ x \in [0,1] : \bigg|x-\sum_{i=1}^n \omega_i
2^{-i}\bigg|<2^{-n\alpha} \\ \text{ for infinitely many }\omega \in
\{0,1\}^n, n \in \N \biggr\}.
\end{multline*}
For each $n \in \N$ we let $\D_n$ denote the set of all $n$-th level
dyadic sums
\begin{eqnarray*}
\D_n := \left\lbrace \sum_{i=1}^n \omega_i
2^{-i}: \omega \in \{0,1\}^n\right\rbrace.
\end{eqnarray*}
The fact that $\B(\alpha)$ belongs to the class
$\mathcal{G}^{1/\alpha}$ is essentially a consequence of the fact that
each $\D_n$ is evenly distributed in $[0,1]$. This motivates the
heuristic principle that if $\{\D_n\}_{n \in\N}$ were replaced by
some other family of suitably well distributed sets then we should
still obtain a set with large intersection properties.

Now take some $\lambda \in \left(\frac{1}{2},1\right)$. Just as every
number between zero and one may be written as a binary expansions, any
number $x \in [0, \lambda(1-\lambda)^{-1}]$ may be written in the
form,
\begin{equation*}
x=\sum_{i=1}^{\infty}\omega_i \lambda^i,
\end{equation*}
for some $(\omega_i)_{i=1}^{\infty} \in \{0,1\}^{\N}$. Following
Pollicott and Simon \cite{lambda expansions deleted digits} we refer
to $\left(\omega_i\right)_{i=1}^{\infty}$ as the $\lambda$-expansion
of $x$. In this paper we shall study the approximation of real numbers
by the finite truncations of their $\lambda$-expansions. Hence, we
study sets of the form
\begin{multline*}
W_\lambda (\alpha) = \biggl\{\, x \in [0, \lambda/(1-\lambda)] : \bigg|x -
\sum_{k=1}^n \omega_k \lambda^k \bigg| < 2^{-\alpha n},\\ \text{ for
  infinitely many } \omega \in \{0,1\}^n,\, n \in \N \,\biggr\}.
\end{multline*}
Since $W_\lambda (\alpha)$ is a subset of $[0,\lambda / (1 -
  \lambda)]$ it cannot belong to any class $\mathcal{G}^s$. Instead we
will consider the corresponding versions of the classes
$\mathcal{G}^s$ for subsets of an interval $I$, denoted by
$\mathcal{G}^s (I)$. It is natural to conjeture that for almost every
$\lambda$, $W_\lambda (\alpha)$ belongs to the set
$\mathcal{G}^{1/\alpha} (I)$. This conjecture is motivated by our
heuristic principle combined with results concerning the distribution
of the $n$-th level $\lambda$-sums,
\begin{eqnarray*}
\D_{\lambda}(n):= \left\lbrace \sum_{i=1}^n \omega_i
2^{-i}: \omega \in \{0,1\}^n\right\rbrace.
\end{eqnarray*}
This topic has attracted a great deal of interest since the time of
Erd\H{o}s \cite{erdos}. Erd\H{o}s studied a class of measures known as
\textit{infinite Bernoulli convolutions} formed by taking the
distributions of the random variable
\[
\sum_{k=1}^\infty \pm \lambda^k,
\]
for some $\lambda \in \left(\frac{1}{2},1\right)$, where in each term
$+$ and $-$ are chosen independently and with equal
probability. Erd\H{o}s proved the existence of an interval $(a,1)$ for
which the infinite Bernoulli convolution is absolutely continuous for
almost every $\lambda \in (a,1)$ \cite{erdos}. Erd\H{o}s also proved
the existence of a countable family of $\lambda$ for which the
corresponding infinite Bernoulli convolution is not absolutely
continuous.
Nonetheless it was conjectured that for almost every $\lambda \in
\left(\frac{1}{2},1\right)$ the corresponding infinite Bernoulli
convolution is absolutely continuous. In a breakthrough work of
Solomyak this conjecture was answered in the affirmative
\cite{solomyak}. This implies that for typical $\lambda$ the sums
$\D_{\lambda}(n)$ are fairly evenly distributed in the sense of
Lebesgue.

We shall show that for almost all $\lambda \in (\frac{1}{2},
\frac{2}{3})$, the set $W_\lambda (\alpha)$ belongs to the class
$\mathcal{G}^{\frac{1}{\alpha}} (I)$. However there is a dense set of
$\lambda$ such that the dimension of $W_{\lambda}(\alpha)$ drops below
$1/\alpha$. We also show that for any $\lambda \in (\frac{1}{2}, 1)$,
the set $W_\lambda (\alpha)$ belongs to $\mathcal{G}^s (I)$, at least
for $s = - \frac{1}{\alpha} \frac{\log \lambda}{\log 2}$. We also show
that this estimate is sharp in the sense that there exists a countable
set of $\lambda$ for which $\dimH W_\lambda (\alpha) = -
\frac{1}{\alpha} \frac{\log \lambda}{\log 2}$, and hence $W_\lambda
(\alpha)$ is not in the class $\mathcal{G}^s (I)$ for any $s$ larger
than $- \frac{1}{\alpha} \frac{\log \lambda}{\log 2}$.

\section{Notation and Statement of Results}

We begin by defining the classes $\mathcal{G}^s (I)$ referred to in
the introduction. One characterisation of Falconer's class
$\mathcal{G}^s$ is as follows \cite{falconernew}. $\mathcal{G}^s$ is
the set of all $G_{\delta}$ sets $A$ which have the property that for
any countable collection $\{f_j\}_{j \in \N}$ of similarity
transformations $f_j: \R \rightarrow \R$ we have,
\[
\dimH \Bigl(\bigcap_{j \in \N} f_j (A) \Bigl) \geq s.
\]

The class $\mathcal{G}^s (I)$ may be defined in terms of $\mathcal{G}^s$.

\begin{defn}
Given an interval $I$, the class $\G^s(I)$ is the class of subsets
of $I$ given by $\mathcal{G}^s(I):= \left\lbrace A \subseteq I:
A+\diam(I) \cdot \Z \in \mathcal{G}^s \right\rbrace.$
\end{defn}

We let $I_{\lambda}$ denote the closed interval
$[0,\lambda/(1-\lambda)]$ which consists of all points $x\in \R$ which
may be written in the form $x=\sum_{i=1}^{\infty}\omega_i\lambda^{i}$
for some $\omega\in \{0,1\}^{\N}$.  We shall consider the sets
$W_{\lambda}(\alpha)$ of points which are $\alpha$-well-approximated
by $\lambda$-expansions,
\begin{equation*}
W_{\lambda}(\alpha):= \bigcap_{m\in \N}\bigcup_{n\geq m}
\bigcup_{\omega \in \{0,1\}^n}\biggl\{\, x \in
I_\lambda : \bigg|x-\sum_{i=1}^n \omega_i
\lambda^{i}\bigg|<2^{-n\alpha} \,\biggr\}.
\end{equation*}

\begin{theorem}\label{main bullet points}
  Choose $\alpha \in (1, \infty)$. 
  \begin{enumerate}
  \item For all $\lambda \in \left(\frac{1}{2},1\right)$, $\dim
    W_{\lambda}(\alpha)\leq \frac{1}{\alpha}$,
  \item For almost every $\lambda \in
    \left(\frac{1}{2},\frac{2}{3}\right)$, $W_{\lambda}(\alpha) \in
    \G^{s}(I_{\lambda})$ for $s=\frac{1}{\alpha}$,
  \item For a dense set of $\lambda \in \left(\frac{1}{2},1\right)$,
    $\dim W_{\lambda}(\alpha) < \frac{1}{\alpha}$,
  \item For all $\lambda \in \left(\frac{1}{2},1\right)$,
    $W_{\lambda}(\alpha) \in \G^{s}(I_{\lambda})$ for $s=-\frac{\log
    \lambda}{\log 2}\frac{1}{\alpha}$,
  \item For a countable set of $\lambda \in
    \left(\frac{1}{2},1\right)$, $\dim W_{\lambda}(\alpha)=-\frac{\log
      \lambda}{\log 2}\frac{1}{\alpha}$.
  \end{enumerate}
\end{theorem}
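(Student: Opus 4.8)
The plan is to organize the five claims around a single dichotomy: an upper bound for $\dim W_{\lambda}(\alpha)$ controlled by the cardinality and separation of the level sets $\D_{\lambda}(n)$, and a lower bound controlled by their density in $I_{\lambda}$, with the two bounds coinciding exactly at the extremal, overlapping values of $\lambda$. For claim (1) I would argue by a direct covering. Writing $c_{\omega}=\sum_{i\le n}\omega_i\lambda^{i}$, for every $m$ we have $W_{\lambda}(\alpha)\subseteq\bigcup_{n\ge m}\bigcup_{\omega\in\{0,1\}^n}\bigl(c_{\omega}-2^{-n\alpha},c_{\omega}+2^{-n\alpha}\bigr)$, a union of at most $2^{n}$ intervals of length $2\cdot2^{-n\alpha}$ at each level $n$. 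Summing the $s$-dimensional content gives a bound of order $\sum_{n\ge m}2^{n}(2^{-n\alpha})^{s}=\sum_{n\ge m}2^{s}2^{n(1-s\alpha)}$, which tends to $0$ as $m\to\infty$ whenever $s>1/\alpha$; hence $\dim W_{\lambda}(\alpha)\le 1/\alpha$.

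For the universal lower bound (4) the key elementary observation is that $\D_{\lambda}(n)$ is always $O(\lambda^{n})$-dense in $I_{\lambda}$: since every $x\in I_{\lambda}$ admits a $\lambda$-expansion, $|x-c_{\omega}|\le\sum_{i>n}\lambda^{i}=\lambda^{n+1}/(1-\lambda)$ for the truncation $\omega$ of that expansion. Thus the centres $\{c_{\omega}\}$ form a $\delta_n$-dense (ubiquitous) family with $\delta_n\asymp\lambda^{n}$, which we thicken to intervals of radius $r_n=2^{-n\alpha}$; note $r_n<\delta_n$ precisely because $\alpha>1>-\log_2\lambda$, so the targets are genuinely small. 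A ubiquity / Mass Transference argument then shows that the associated $\limsup$ set has full Hausdorff measure in the gauge $s$ with $s=\log(1/\delta_n)/\log(1/r_n)=-\log\lambda/(\alpha\log2)$, and, crucially, that it lies in the large intersection class $\G^{s}(I_{\lambda})$, since $\limsup$ sets arising from ubiquitous systems of balls are known to belong to Falconer's classes.

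Claim (2) is the principal difficulty and the only place where the hypothesis $\lambda<\frac{2}{3}$ and measure-theoretic input enter. To raise $s$ from $-\log\lambda/(\alpha\log2)$ to $1/\alpha$ one needs the much stronger density $\delta_n\asymp 2^{-n}$, which is the best possible given $|\D_{\lambda}(n)|\le 2^{n}$, and this forces the $2^{n}$ centres to be spread essentially uniformly at scale $2^{-n}$. I would obtain this from Solomyak's theorem \cite{solomyak}: for almost every $\lambda$ the Bernoulli convolution $\nu_{\lambda}$ is absolutely continuous, so the normalised counting measures $2^{-n}\sum_{\omega}\delta_{c_{\omega}}$ approximate an absolutely continuous measure. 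The hard part—and the main obstacle overall—is upgrading this weak, essentially $L^{2}$, statement into the uniform quantitative ubiquity at scale $2^{-n}$ required to run the Mass Transference argument with gauge $s=1/\alpha$; I expect to combine a transversality/$L^{2}$ estimate for $\nu_{\lambda}$ with a Borel--Cantelli argument in the parameter $\lambda$, and it is exactly the range of validity of such an estimate that confines the almost-everywhere statement to $(\frac{1}{2},\frac{2}{3})$.

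Finally, claims (3) and (5) come from the opposite, overlapping regime, where the covering bound of (1) improves because $|\D_{\lambda}(n)|$ grows strictly slower than $2^{n}$. If $\lambda$ satisfies an exact overlap relation $\sum_i\epsilon_i\lambda^{i}=0$ with $\epsilon_i\in\{-1,0,1\}$ not all zero, then two distinct digit strings produce the same sum, and a transfer-matrix count propagates this to $|\D_{\lambda}(n)|\le\theta^{n}$ for some $\theta<2$; the covering bound then gives $\dim W_{\lambda}(\alpha)\le\log\theta/(\alpha\log2)<1/\alpha$, and since such algebraic $\lambda$ are dense in $(\frac{1}{2},1)$ this yields (3). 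For (5) I would specialise to $\lambda=1/\beta$ with $\beta$ a Pisot number, where Garsia's separation lemma forces the points of $\D_{\lambda}(n)$ to be separated by at least a constant times $\lambda^{n}$, so that $|\D_{\lambda}(n)|=O(\lambda^{-n})$. The covering bound then gives $\dim W_{\lambda}(\alpha)\le-\log\lambda/(\alpha\log2)$, matching the lower bound from (4) and hence giving equality; as the Pisot numbers form a countable set, this produces the countable family required in (5).
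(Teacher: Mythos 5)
Parts (1), (3), (4) and (5) of your proposal are essentially sound. Part (1) is the same covering argument as the paper's (Lemma \ref{tau cover lemma} with $\tau(\lambda)\leq 1$). Part (3) is the paper's argument: the authors also take the dense set of $\lambda$ with $1=\sum_{i=1}^n\omega_i\lambda^i$ and propagate the relation block-wise, following \cite{ss}, to get $\#F_{\lambda,q(n+1)}\leq(2^{n+1}-1)^q$ and hence $\tau(\lambda)<1$. Part (5) in the paper is done only for multinacci $\lambda$, via an explicit subshift of finite type (forbidden word $01^m$) and a Perron--Frobenius eigenvector computation showing the growth rate is exactly $\lambda^{-1}$; your Pisot/Garsia-separation route gives the same counting bound $\#\D_{\lambda}(n)=O(\lambda^{-n})$ more generally and more cleanly, and the matching lower bound from (4) is used identically in both treatments. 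Part (4) is where you genuinely diverge: the paper embeds $A_{\beta}(\kappa)\subset W_{\lambda}(\alpha)$ for $\beta=\lambda^{-1}$, $\kappa=\frac{\alpha\log 2}{\log\beta}-1$, reduces to subshifts of finite type via the F\"arm--Persson theorem \cite{farmperssonschmeling}, and follows Falconer's Example~8.9; your direct ubiquity argument (the truncations are $\lambda^{n+1}/(1-\lambda)$-dense, shrink to radius $2^{-\alpha n}$) is viable, but you must invoke a theorem of Durand type (``ubiquitous systems yield limsup sets in the large intersection classes''), not the Mass Transference Principle, which produces Hausdorff-measure statements and says nothing about membership in $\G^{s}$.

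The genuine gap is part (2), which is indeed the heart of the paper, and your sketch does not close it. First, the quantitative input you hope for is false as stated: $\D_{\lambda}(n)$ is never $C2^{-n}$-dense in all of $I_{\lambda}$, since near the endpoint $0$ the smallest nonzero $n$-level sum is $\lambda^{n}\gg 2^{-n}$. More seriously, absolute continuity of the Bernoulli convolution \cite{solomyak} is a qualitative a.e.\ statement about the limit measure $\nu_{\lambda}$ (whose $L^2$ density may vanish on parts of $I_{\lambda}$), and it gives no per-scale control of the maximal gap of $\D_{\lambda}(n)$; the transversality/$L^2$ machinery controls \emph{pair correlations} (how many pairs of sums are abnormally close), not gaps, so the Borel--Cantelli upgrade to ``uniform ubiquity at scale $2^{-n}$'' has no available summable estimates — this is precisely the step you flag as hard, and it is the missing proof. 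The paper avoids ubiquity altogether: it introduces the proximity counts $P_n(\lambda,k,r)$, uses the Shmerkin--Solomyak lemma \cite{shmerkinsolomyak} on $\{-1,0,1\}$ power series (this uniform derivative bound, valid on $(\frac12,\frac23-\varepsilon)$, is the actual source of the restriction to $(\frac12,\frac23)$ — not the range of an a.c.\ result) together with Rams' covering lemma \cite{rams} to show the exceptional parameter set $A(s)$ has $\dimH A(s)\leq s$; then, for $\lambda\notin A(s)$, it verifies the large-intersection property \emph{directly from the definition}: given an arbitrary countable family of similarities $\{f_j\}$, it constructs a nested Cantor set inside $\bigcap_j f_j(W_{\lambda}(\alpha)+\diam(I_{\lambda})\Z)$ carrying a measure whose \emph{correlation} dimension is $\geq s$ — and correlation dimension is exactly what pair-correlation bounds can control. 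This also yields the stronger Theorem \ref{theorem typical} (the exceptional set has Hausdorff dimension $\leq s$), which no argument of the form ``a.e.\ $\lambda$ via Borel--Cantelli'' can deliver. To repair your proposal you would have to replace the ubiquity step by a second-moment construction of this kind.
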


In addition to Theorem \ref{main bullet points} (2) we also have the
following upper bound on the dimension of the set of exceptions.

\begin{theorem}\label{theorem typical}
  Given $\alpha >1$ and $s\leq \frac{1}{\alpha}$ we have,
  \begin{equation*}
    \dimH \Bigl\{\, \lambda \in
    \Bigl( \frac{1}{2},\frac{2}{3} \Bigr): W_{\lambda}(\alpha) \notin
    \G^{s}(I_{\lambda}) \,\Bigr\} \leq s.
  \end{equation*}
\end{theorem}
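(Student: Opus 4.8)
The plan is to combine a large-intersection (energy) criterion with a Peres--Schlag style transversality argument run against a Frostman measure on the putative exceptional set. First I would reduce membership in $\G^s(I_\lambda)$ to a uniform energy bound on the level measures. Write $f_\omega(\lambda)=\sum_{i=1}^n\omega_i\lambda^i$, let $r_n=2^{-\alpha n}$, and let $\mu_n=2^{-n}\sum_{\omega\in\{0,1\}^n}\delta_{f_\omega(\lambda)}$ be the uniform measure on $\D_{\lambda}(n)$. Introduce the truncated $s$-energy
\[
\mathcal{E}_{s,n}(\lambda)=\frac{1}{2^{2n}}\sum_{\omega,\omega'\in\{0,1\}^n}\min\bigl(r_n^{-s},\,|f_\omega(\lambda)-f_{\omega'}(\lambda)|^{-s}\bigr).
\]
The principle underlying Theorem~\ref{main bullet points}(2) is that, since $W_\lambda(\alpha)$ is the $\limsup$ of the balls $B(f_\omega(\lambda),r_n)$ whose centres fill out $I_\lambda$, one has $W_\lambda(\alpha)\in\G^s(I_\lambda)$ as soon as $\liminf_n\mathcal{E}_{s,n}(\lambda)<\infty$. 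The diagonal $\omega=\omega'$ contributes $2^{-n}r_n^{-s}=2^{n(s\alpha-1)}$, which stays bounded precisely when $s\le 1/\alpha$; this is where the hypothesis enters. Thus it suffices to bound the dimension of $E_s:=\{\lambda\in(\tfrac12,\tfrac23):\liminf_n\mathcal{E}_{s,n}(\lambda)=\infty\}$, which contains the exceptional set.

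Next I would integrate this energy against a Frostman measure. Suppose, for contradiction, that $\dimH E_s>s$. Fix $t\in(s,\min(\dimH E_s,1))$ and a probability measure $\nu$ carried by $E_s$ with $\nu(B(\lambda,\rho))\ll\rho^t$. The key input is the transversality of the difference polynomials $g_{\omega,\omega'}(\lambda)=\sum_i(\omega_i-\omega_i')\lambda^i$ on $(\tfrac12,\tfrac23)$: writing $g_{\omega,\omega'}=\pm\lambda^{i_0}h$ with $h(0)=\pm1$ and $i_0$ the length of the common prefix of $\omega,\omega'$, transversality gives that $\{\lambda:|g_{\omega,\omega'}(\lambda)|<\rho\}$ lies in an interval of length $\ll\rho\,\lambda^{-i_0}\ll 2^{i_0}\rho$, whence $\nu(\{|g_{\omega,\omega'}|<\rho\})\ll(2^{i_0}\rho)^t$. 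A dyadic decomposition of the range of $|g_{\omega,\omega'}|$ together with $t>s$ then bounds the off-diagonal contribution of each pair by $\ll 2^{i_0 t}$, and summing over the $\approx 2^{2n-i_0}$ pairs with common prefix of length $i_0$ gives $\int\mathcal{E}_{s,n}\,d\nu\ll\sum_{i_0\ge0}2^{i_0(t-1)}$, which converges because $t<1$. Hence $\sup_n\int\mathcal{E}_{s,n}(\lambda)\,d\nu(\lambda)<\infty$.

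By Fatou's lemma, $\int\liminf_n\mathcal{E}_{s,n}\,d\nu\le\liminf_n\int\mathcal{E}_{s,n}\,d\nu<\infty$, so $\liminf_n\mathcal{E}_{s,n}(\lambda)<\infty$ for $\nu$-a.e.\ $\lambda$. By the energy criterion this forces $W_\lambda(\alpha)\in\G^s(I_\lambda)$ for $\nu$-a.e.\ $\lambda$, contradicting that $\nu$ is carried by $E_s$. Therefore no such $\nu$ exists for any $t>s$, and $\dimH E_s\le s$, which gives the theorem.

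I expect two steps to carry the real weight. The first is justifying the energy criterion for $\G^s$-membership along a subsequence of scales --- that is, turning the qualitative heuristic behind Theorem~\ref{main bullet points}(2) into the clean statement that $\liminf_n\mathcal{E}_{s,n}<\infty$ forces the large-intersection property; this is the structural heart linking Falconer's class to a Frostman-type bound on the $\mu_n$. The second, and I expect the main obstacle, is the transversality estimate: it must hold uniformly over the exponentially growing family $\{g_{\omega,\omega'}\}$ on the whole range $(\tfrac12,\tfrac23)$, and the common-prefix factor $\lambda^{-i_0}$ must be tracked carefully so that the sum over pairs still converges (this is exactly where $t<1$ is used) without degrading the threshold below $s$. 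The restriction to $(\tfrac12,\tfrac23)$ is presumably dictated by the interval on which the requisite transversality of the $\{-1,0,1\}$-coefficient polynomials is available.
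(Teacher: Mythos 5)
Your parameter-side argument is essentially sound, and it takes a genuinely different route from the paper's. Your truncated energy $\mathcal{E}_{s,n}(\lambda)$ is a continuous repackaging of the paper's proximity counts $P_n(\lambda,k,r)$ (your decomposition over the common-prefix length $i_0$, with $\approx 2^{2n-i_0}$ pairs each contributing $\ll 2^{i_0 t}$, is exactly the paper's $\tilde{P}_n \leq 2^n + \sum_l 2^{n-l}P_l$ split), and both arguments rest on the same transversality input (Lemma \ref{Shmerkin Solomyak} of Shmerkin--Solomyak, giving that $\{\lambda : |g_{\omega,\omega'}(\lambda)|<\rho\}$ is an interval of length $\ll 2^{i_0}\rho$). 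Where you bound $\dimH E_s$ by contradiction against a Frostman measure, the paper covers its bad set $A(s)$ directly by intervals using a combinatorial covering lemma of Rams; the two are of comparable strength, yours being arguably more familiar, the paper's avoiding Frostman's lemma altogether. Minor patches you would need: Lemma \ref{Shmerkin Solomyak} holds on $\left(\frac{1}{2},\frac{2}{3}-\varepsilon\right)$, so exhaust $\left(\frac{1}{2},\frac{2}{3}\right)$ by countably many such intervals; and Frostman's lemma requires $E_s$ to be Borel, which holds since each $\mathcal{E}_{s,n}$ is continuous in $\lambda$.

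The genuine gap is the energy criterion you lean on: the implication ``$\liminf_n \mathcal{E}_{s,n}(\lambda)<\infty$ implies $W_\lambda(\alpha)\in\G^s(I_\lambda)$'' is not an available off-the-shelf fact, and it is precisely where the bulk of the paper's work lies. Falconer's classes are characterized via net measures and Hausdorff contents, not via energies of discrete approximating measures, and membership in $\G^s(I_\lambda)$ demands $\dimH \bigcap_j f_j\left(W_\lambda(\alpha)+\diam(I_\lambda)\cdot\Z\right)\geq s$ for \emph{every} countable family of similarities. The paper converts the proximity/energy bound into this by hand: it builds, inside each such intersection, a nested Cantor set --- placing cylinders $g_\omega(I_\lambda)$ plus integer translates into the images of the $f_{j(q)}$ via Lemma \ref{lots of cylinders}, and appending runs $(g_0)^{m_q}$ of zeros so that the limit points are $2^{-\alpha n}$-approximable at the times $\hat{\gamma}_q$ --- and then estimates the correlation dimension of the pushed-forward Bernoulli measure. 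Crucially, the pair counts arising there compare points sitting in \emph{different} integer translates and different similarity images, so the self-correlation bound your energy controls only applies after the translation lemma $N_r(\varphi,\varphi+t)\leq 4\,N_r(\varphi,\varphi)$ (Lemma \ref{translation lemma}); your sketch has no counterpart for this step, nor for the spatial-distribution input (bounded energy alone, concentrated in part of $I_\lambda$, would not do --- one needs the centres to fill out every subinterval at every scale, which is what Lemma \ref{lots of cylinders} extracts). The criterion you want is in fact true --- it is essentially the Frostman-type lemma for sets with large intersections that the same authors established in later work --- but in a self-contained proof of this theorem it must be proved, and proving it is the theorem's real content; your diagonal computation showing boundedness exactly when $s\leq 1/\alpha$ correctly identifies where that hypothesis enters, matching the paper's use of $2^{-\gamma_q}\leq\left(2^{-\alpha\gamma_q}\right)^s$.
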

The remainder of the paper is structured as follows. In Section
\ref{s P of Th 2} we prove Theorem \ref{theorem typical}, which
implies Theorem \ref{main bullet points} (2). In Section
\ref{sec:lowerbound} we establish the uniform lower bound given in
Theorem \ref{main bullet points} (4). In Section \ref{s covering args}
we prove the upper bounds in Theorem \ref{main bullet points} parts
(1), (3) and (5).

\section{Proof of Theorem \ref{theorem typical}}\label{s P of Th 2}
In this section we prove Theorem \ref{theorem typical}. The proof of
this theorem is influenced by Rams' work on the dimension of the exceptional set for families of self-similar measures with overlaps \cite{rams}.

For each $\lambda\in\left(\frac{1}{2},\frac{2}{3}\right)$ and $k\in
\N\cup\{0\}$, $r\in \N$ we define a pair of proximity numbers
\begin{align*}
\tilde{P}_n(\lambda,k,r) &:= \# \biggl\{ (\omega,\kappa)\in
\left(\{0,1\}\right)^n:
\bigg|\sum_{i=1}^n(\omega_i-\kappa_i)\lambda^i\bigg|\leq r \cdot
\lambda^{n+k} \biggr\},\\ P_n(\lambda,k,r) &:= \# \biggl\{
(\omega,\kappa)\in \left(\{0,1\}\right)^n:
\bigg|\sum_{i=1}^n(\omega_i-\kappa_i)\lambda^i\bigg| \\ & \hspace{5cm}
\leq r \cdot \lambda^{n+k}\text{ and }\omega_1\neq \kappa_1 \biggr\}.
\end{align*}

\begin{lemma}\label{proximity inequality}
For all $n \in \N$ and $k\in \N\cup\{0\}$, $r\in \N$ we have,
\[
\tilde{P}_n(\lambda,k,r)\leq
2^n+\sum_{l=1}^{n}2^{n-l}P_l(\lambda,k,r).
\]
\end{lemma}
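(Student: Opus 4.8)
The plan is to partition the pairs $(\omega,\kappa)$ counted by $\tilde{P}_n(\lambda,k,r)$ according to the first coordinate at which $\omega$ and $\kappa$ disagree, and then to recognise each block as a rescaled copy of one of the quantities $P_l(\lambda,k,r)$. First I would split off the diagonal: the $2^n$ pairs with $\omega=\kappa$ give a difference equal to zero, which certainly satisfies the constraint $|\sum_{i=1}^n(\omega_i-\kappa_i)\lambda^i|\le r\lambda^{n+k}$, and these account for the leading term $2^n$ on the right-hand side.

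For a pair with $\omega\neq\kappa$, let $l\in\{1,\dots,n\}$ be the smallest index with $\omega_l\neq\kappa_l$, so that $\omega_i=\kappa_i$ for $i<l$. Then $\sum_{i=1}^n(\omega_i-\kappa_i)\lambda^i = \lambda^{l-1}\sum_{j=1}^{n-l+1}(\omega_{l-1+j}-\kappa_{l-1+j})\lambda^j$, and writing $\omega'_j=\omega_{l-1+j}$, $\kappa'_j=\kappa_{l-1+j}$ I obtain a pair $(\omega',\kappa')\in\{0,1\}^{n-l+1}$ with $\omega'_1\neq\kappa'_1$. The key bookkeeping step is to check that the proximity constraint transforms correctly: dividing by $\lambda^{l-1}$ turns $|\sum_{i=1}^n(\omega_i-\kappa_i)\lambda^i|\le r\lambda^{n+k}$ into $|\sum_{j=1}^{m}(\omega'_j-\kappa'_j)\lambda^j|\le r\lambda^{m+k}$ with $m=n-l+1$, which is \emph{exactly} the defining condition for $P_m(\lambda,k,r)$, with the same $k$ and $r$.

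Next I would count. The common prefix $(\omega_1,\dots,\omega_{l-1})=(\kappa_1,\dots,\kappa_{l-1})$ may be chosen freely, giving $2^{l-1}$ possibilities, while the tail $(\omega',\kappa')$ ranges over precisely the configurations enumerated by $P_{n-l+1}(\lambda,k,r)$. This produces a bijection showing that the number of admissible pairs whose first disagreement occurs at $l$ equals $2^{l-1}P_{n-l+1}(\lambda,k,r)$. Summing over $l$ and reindexing by $m=n-l+1$ (so that $2^{l-1}=2^{n-m}$) yields $\tilde{P}_n(\lambda,k,r)=2^n+\sum_{m=1}^n 2^{n-m}P_m(\lambda,k,r)$, from which the stated inequality follows at once.

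The argument is essentially combinatorial, so I do not anticipate a genuine obstacle; the only point requiring care is the exponent bookkeeping in the rescaling, namely verifying that factoring out $\lambda^{l-1}$ leaves the threshold in the form $r\lambda^{m+k}$ appropriate to $P_m$ rather than some shifted value of $k$. Indeed the computation gives equality, and the lemma asserts only the weaker upper bound, presumably because that is the form convenient for the subsequent geometric-series estimate.
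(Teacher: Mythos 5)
Your proposal is correct and follows essentially the same route as the paper: both decompose the pairs by the location of the first disagreement, factor out the common power of $\lambda$ to reduce the proximity condition to exactly the one defining $P_m(\lambda,k,r)$ with the same $k$ and $r$, and count the free prefixes to get the factor $2^{n-m}$. Your observation that the decomposition in fact yields equality (the lemma stating only the inequality it needs) is also implicit in the paper's argument, so there is nothing to add.
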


\begin{proof}
  For notational convenience we let,
  \begin{eqnarray*}
    \mathcal{P}_n(\lambda,k,r):=\left\lbrace (\omega,\kappa)\in
    \left(\{0,1\}\right)^n:
    \bigg|\sum_{i=1}^n(\omega_i-\kappa_i)\lambda^i\bigg|\leq r \cdot
    \lambda^{n+k} \right\rbrace.
  \end{eqnarray*}
  We begin by writing,
  \begin{align}\label{ summands for different proximity numbers }
    \tilde{P}_n (\lambda,k,r) = \#\bigl\{ (\omega,\kappa)\in
    \mathcal{P}_n(\lambda,k,r) & : \omega = \kappa \bigr\} \\ +
    \sum_{l=1}^{n}\#\bigl\{ (\omega,\kappa)\in
    \mathcal{P}_n(\lambda,k,r) & : \omega_i=\kappa_i \text{ for }i\leq
    n-l \nonumber \\ & \phantom{:} \text{ and }\omega_{n-l+1}\neq
    \kappa_{n-l+1}\bigr\}. \nonumber
  \end{align}
  The cardinality of the first summand is clearly equal to
  $2^n$. Given a pair $(\omega,\kappa)\in \mathcal{P}_n(\lambda)$ with
  $\omega_i=\kappa_i$ for $i\leq n-l$ and $\omega_{n-l+1}\neq
  \kappa_{n-l+1}$, for some $l \in \{1,\cdots,k\}$ there exists some
  $\eta \in \{0,1\}^{n-l}$ and $\zeta,\xi \in \{0,1\}^l$ with
  $\eta_1\neq \zeta_1$ such that $\omega= \eta \zeta$ and $\kappa =
  \eta \xi$. It follows from the fact that $(\omega,\kappa)\in
  \mathcal{P}_n(\lambda,k,r)$ that,
  \begin{equation*}
    \lambda^{n-l} \bigg|\sum_{i=1}^l(\zeta_i-\xi_i)\lambda^i\bigg|\leq
    r\cdot \lambda^{n+k}.
  \end{equation*}
  Thus,
  \begin{equation*}
    \bigg|\sum_{i=1}^l(\zeta_i-\xi_i)\lambda^i\bigg|\leq r \cdot
    \lambda^{l+k}.
  \end{equation*}
  It follows that the number of elements of
  \begin{equation*}
    \left\lbrace (\omega,\kappa)\in \mathcal{P}_n(\lambda,k,r):
    \omega_i=\kappa_i \text{ for }i\leq n-l \text{ and
    }\omega_{n-l+1}\neq \kappa_{n-l+1}\right\rbrace
  \end{equation*}
  is equal to $P_l(\lambda,k,r)$ multiplied by the number of possible
  initial strings of length $n-l$.  Thus,
  \begin{multline*}
    \#\left\lbrace (\omega,\kappa)\in \mathcal{P}_n(\lambda,k,r):
    \omega_i=\kappa_i \text{ for }i\leq n-l \text{ and
    }\omega_{n-l+1}\neq \kappa_{n-l+1}\right\rbrace \\ =
    2^{n-l}P_l(\lambda,k,r).
  \end{multline*}
  Substituting into equation (\ref{ summands for different proximity
    numbers }) completes the proof of the Lemma.
\end{proof}

To prove that $W_\lambda (\alpha)$ is in $\mathcal{G}^s (I_\lambda)$
we will need good estimates on the numbers $P_n (\lambda, k, r)$. We
will get such estimates for almost all $\lambda \in (\frac{1}{2},
\frac{2}{3})$, and the first step to get this is using the following
lemma.

\begin{lemma}[Shmerkin, Solomyak \cite{shmerkinsolomyak}] \label{Shmerkin Solomyak}
  For any $\varepsilon > 0$, there exists some $\delta>0$ such that
  for all polynomials of the form $g(\lambda)=1+\sum_{i=1}^n
  a_i\lambda^i$ with $a_i \in \{-1,0,1\}$ and for all $\lambda\in
  (\frac{1}{2},\frac{2}{3} - \varepsilon)$ we have
  $g'(\lambda)<-\delta$ whenever $g(\lambda)<\delta$.
\end{lemma}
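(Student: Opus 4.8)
The statement is a \emph{transversality} property in the sense of Solomyak, and the natural route is to combine a soft compactness argument, which is what produces the uniform constant $\delta$, with a quantitative analysis of extremal coefficient sequences. I would argue by contradiction. Fix $\varepsilon>0$, write $J_\varepsilon=[\frac12,\frac23-\varepsilon]$, and suppose that for every $\delta>0$ the conclusion fails. Then there are $\delta_k\downarrow 0$, points $\lambda_k\in J_\varepsilon$, and polynomials $g_k(\lambda)=1+\sum_{i=1}^{n_k}a_i^{(k)}\lambda^i$ with coefficients in $\{-1,0,1\}$ such that $g_k(\lambda_k)<\delta_k$ while $g_k'(\lambda_k)\ge-\delta_k$.

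The feature that makes the limit well behaved is that $\sum_{i\ge1} i\,(\frac23-\varepsilon)^{i-1}<\infty$, so the series $\sum a_i\lambda^i$ and its termwise derivative converge uniformly on $J_\varepsilon$ with a bound independent of the coefficient sequence. Viewing each $g_k$ as the element $(a_i^{(k)})_{i\ge1}\in\{-1,0,1\}^{\N}$ (padding with zeros), compactness of $\{-1,0,1\}^{\N}$ in the product topology together with this uniform summability lets me pass to a subsequence along which $\lambda_k\to\lambda_*\in J_\varepsilon$ and $g_k\to g_*$ in $C^1(J_\varepsilon)$, where $g_*(\lambda)=1+\sum_{i\ge1}a_i\lambda^i$ again has coefficients in $\{-1,0,1\}$. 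Passing to the limit in the two inequalities yields $g_*(\lambda_*)\le 0$ and $g_*'(\lambda_*)\ge 0$. Thus the lemma reduces to the pointwise assertion that for every $\lambda_0\in J_\varepsilon$ there is no series $g_*(\lambda)=1+\sum_{i\ge1}a_i\lambda^i$ with $a_i\in\{-1,0,1\}$ satisfying $g_*(\lambda_0)\le 0$ and $g_*'(\lambda_0)\ge 0$ simultaneously; equivalently, wherever such a series is non-positive it must be strictly decreasing.

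This pointwise claim is the heart of the matter and the step I expect to be the main obstacle, since it is here that the precise range of $\lambda$ and the discreteness of the coefficients must be exploited. The crude comparison $g_*(\lambda)\ge 1-\sum_{i\ge1}\lambda^i=\frac{1-2\lambda}{1-\lambda}$ already forces $g_*>0$ for $\lambda<\frac12$ and explains the left endpoint, but for interior $\lambda_0$ both constraints must be used together. Following Solomyak's extremal method I would study the coefficient sequence minimising $g_*(\lambda_0)$ subject to $g_*'(\lambda_0)\ge 0$: a Lagrange/exchange argument identifies the extremisers as essentially single-sign-change sequences (coefficients $-1$ up to some index and $+1$ thereafter), to be compared against an explicit $(*)$-function, so that infeasibility of the two constraints reduces to an estimate on these explicit series together with control of their tails. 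Carrying this out is delicate precisely because the admissible threshold is sensitive to the integer constraint $a_i\in\{-1,0,1\}$ as opposed to the continuous relaxation $a_i\in[-1,1]$; the sharp form of this extremal estimate is exactly the content of \cite{shmerkinsolomyak}, which I would invoke to close the contradiction.
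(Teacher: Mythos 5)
The paper offers no proof of this lemma at all: it is stated as a quotation from Shmerkin and Solomyak \cite{shmerkinsolomyak}, whose uniform $\delta$-transversality estimate on $(\frac12,\frac23-\varepsilon)$ is exactly the statement being used. Your proposal ultimately rests on that same citation for the essential extremal estimate, and your added compactness reduction from the uniform statement to the pointwise one (using the uniform geometric tail bounds on $[\frac12,\frac23-\varepsilon]$ to get $C^1$ convergence along a subsequence in $\{-1,0,1\}^{\N}$) is sound, so your treatment is essentially the same as the paper's.
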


Given $n \in \N$, a pair $(\omega,\kappa)\in (\{0,1\}^n)^2$
and $\gamma>0$ we let
\begin{equation*}
  I_n(\omega,\kappa,\gamma):=\left\lbrace \lambda \in
  \left(\frac{1}{2},\frac{2}{3}\right):
  \bigg|\sum_{i=1}^n(\omega_i-\kappa_i)\lambda^i\bigg|\leq \gamma
  \right\rbrace.
\end{equation*}

\begin{lemma}\label{cor to Sh and Sol}
  Let $\delta$ be as in Lemma \ref{Shmerkin Solomyak}. Then for all
  $\gamma\in (0,\delta/2)$ and all pairs $(\omega,\kappa)\in
  (\{0,1\}^n)^2$ with $\omega_1 \neq \kappa_1$,
  $I_n(\omega,\kappa,\gamma)$ has diameter not exceeding
  $4\delta^{-1}\gamma$.
\end{lemma}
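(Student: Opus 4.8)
The plan is to deduce this from the Shmerkin–Solomyak lemma by analysing the polynomial $g(\lambda) = \sum_{i=1}^n (\omega_i - \kappa_i)\lambda^i$ restricted to the set $I_n(\omega,\kappa,\gamma)$. Since $\omega_1 \neq \kappa_1$, the leading (lowest-order) coefficient $\omega_1 - \kappa_1$ equals $\pm 1$, and all coefficients lie in $\{-1,0,1\}$. The first step is to reduce to the normalised form required by Lemma \ref{Shmerkin Solomyak}. Writing $j$ for the smallest index with $\omega_j \neq \kappa_j$ (here $j=1$), we factor out $\lambda^j$ and the sign $\omega_j - \kappa_j = \pm 1$ to obtain $|g(\lambda)| = \lambda^j \, |h(\lambda)|$, where $h(\lambda) = 1 + \sum_{i=1}^{n-j} a_i \lambda^i$ has coefficients $a_i \in \{-1,0,1\}$ and constant term $1$, exactly the form to which the lemma applies.

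First I would record that on $(\tfrac12,\tfrac23)$ the factor $\lambda^j$ satisfies $\lambda^j > \tfrac12$, so the constraint $|g(\lambda)| \le \gamma$ forces $|h(\lambda)| \le 2\gamma < \delta$. By Lemma \ref{Shmerkin Solomyak}, at every point $\lambda$ of $I_n(\omega,\kappa,\gamma)$ where $h(\lambda) < \delta$ we have $h'(\lambda) < -\delta$; since $|h| < \delta$ on this set, $h$ is strictly decreasing there with derivative bounded above by $-\delta$. The key consequence is that $h$ is injective and monotone on $I_n(\omega,\kappa,\gamma)$, so this set is in fact an interval (or empty), and on it $h$ takes values in an interval of length at most $2 \cdot 2\gamma = 4\gamma$ (since $|h| \le 2\gamma$).

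The main computation is then a mean-value-theorem estimate controlling the diameter. If $\lambda, \lambda'$ are the endpoints of $I_n(\omega,\kappa,\gamma)$, then by the mean value theorem there is an intermediate point $\xi$ with $h(\lambda') - h(\lambda) = h'(\xi)(\lambda' - \lambda)$, whence
\begin{equation*}
\diam\bigl(I_n(\omega,\kappa,\gamma)\bigr) = |\lambda' - \lambda| = \frac{|h(\lambda') - h(\lambda)|}{|h'(\xi)|} \le \frac{4\gamma}{\delta} = 4\delta^{-1}\gamma,
\end{equation*}
using $|h'(\xi)| > \delta$ and the oscillation bound on $h$. This gives exactly the claimed bound.

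The step I expect to require the most care is justifying that $I_n(\omega,\kappa,\gamma)$ is genuinely an interval rather than a disconnected set, so that ``diameter'' coincides with the gap between extreme points and the mean value theorem applies cleanly. This follows because the sign condition $h' < -\delta$ holds uniformly wherever $|h| < \delta$: the set $\{\lambda : |h(\lambda)| \le 2\gamma\}$ cannot have two components, since between any two components $h$ would have to increase back up, contradicting strict monotonicity on the whole region where $|h| < \delta$. Making this connectedness argument airtight — handling the possibility that $I_n$ touches the endpoints $\tfrac12$ or $\tfrac23$, and confirming we stay within the range where the lemma's hypothesis is active — is the one place where I would be most careful, though it is essentially a consequence of the uniform derivative bound.
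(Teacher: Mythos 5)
Your proposal is correct and takes essentially the same route as the paper: the paper likewise normalises by dividing out $\lambda$ (after assuming WLOG $\omega_1=1$, $\kappa_1=0$) to reach the form required by the Shmerkin--Solomyak lemma, observes that $\lambda \in I_n(\omega,\kappa,\gamma)$ forces $|g(\lambda)| < \gamma/\lambda < \delta$, and then runs a Rolle/mean-value argument from $\lambda_0 := \inf I_n(\omega,\kappa,\gamma)$, using $g' < -\delta$, to conclude $I_n(\omega,\kappa,\gamma) \subseteq [\lambda_0, \lambda_0 + 4\delta^{-1}\gamma]$. One small remark: the connectedness of $I_n(\omega,\kappa,\gamma)$ that you worry about (and which does not follow merely from monotonicity of $h$ \emph{on} that set) is never actually needed --- once the barrier argument shows $h$ stays below $2\gamma$, hence $h' < -\delta$, everywhere to the right of $\lambda_0$, the mean value theorem applied between $\lambda_0$ and an arbitrary point of the set already bounds the diameter, which is exactly how the paper sidesteps the issue.
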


\begin{proof} 
  Since $\omega_1\neq \kappa_1$ we may assume without loss of
  generality that $\omega_1=1$ and $\kappa_1=0$. Choose $\gamma\in
  (0,\delta/2)$ and all pairs $(\omega,\kappa)\in
  \left(\{0,1\}\right)^n$ with $\omega_1 \neq \kappa_1$. Now let
  $g(\lambda):= \sum_{i=1}^{n}(\omega_i-\kappa_i)\lambda^{i-1}$, which
  is of the required form for Lemma \ref{Shmerkin Solomyak}. We note
  that $\lambda \in I_n(\omega,\kappa,\gamma)$ implies
  $|g(\lambda)|<\gamma/\lambda <\delta$. By Lemma \ref{Shmerkin
    Solomyak} $g'(\lambda)<-\delta$ whenever
  $g(\lambda)<\delta$. Suppose $I_n(\omega,\kappa,\gamma)\neq
  \emptyset$ and choose $\lambda_0:=\inf
  I_n(\omega,\kappa,\gamma)$. It follows from Rolle's theorem that for
  all $\lambda\geq \lambda_0$, $g(\lambda) \leq \gamma<\delta$ and
  hence, $g'(\lambda)<-\delta$. Hence,
  $I_n(\omega,\kappa,\gamma)\subseteq
  [\lambda_0,\lambda_0+4\delta^{-1}\gamma]$.
\end{proof}

Using the following result by Rams \cite{rams} we will prove our
desired estimates for the numbers $P_n (\lambda, k, r)$.

\begin{lemma}[Rams \cite{rams}] \label{Ram's combinatorial lemma}
  Suppose we have a family of sets $\{E_i\}_i$ with $E_i$ of diameter
  $d_i$. Let $\rho>0$ be some positive real number and $b\in
  \N$. Then, the set of points which belong to at least $b$ of the
  sets $E_i$ may be covered by some family of intervals
  $\{\tilde{E}_j\}_j$ so that $\tilde{E}_j$ has diameter $\tilde{d}_j$
  with $\sup_j \tilde{d}_j \leq 4 \sup_i d_i$ and
\begin{equation*}
  \sum_j \tilde{d}_j^{\rho} \leq 4^{\rho} \cdot \frac{1}{b} \sum_i
  d_i^{\rho}.
\end{equation*}
\end{lemma}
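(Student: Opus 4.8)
The plan is to establish this as a Vitali-type covering lemma, in which the saving factor $\tfrac1b$ is extracted from a charging argument that exploits the $b$-fold multiplicity. First I would reduce to the essential case: replacing each $E_i$ by its convex hull changes neither $d_i$ nor the conclusion while only enlarging the set of points of multiplicity at least $b$, so I may assume every $E_i$ is a closed interval, and a routine exhaustion then lets me assume the family is finite. Writing $A$ for the set of points lying in at least $b$ of the $E_i$, the target is a disjoint subcollection whose dilates by a bounded factor cover $A$ and whose $\rho$-content is at most $b^{-1}\sum_i d_i^\rho$, up to the constant $4^\rho$.

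The covering mechanism I would use is as follows. For $x\in A$ let $\phi(x)$ be the $b$-th largest of the diameters $d_i$ over those $i$ with $x\in E_i$, so that at least $b$ of the intervals through $x$ have diameter at least $\phi(x)$. Applying the one-dimensional Vitali selection (processing points in decreasing order of $\phi$) to the intervals centred at $x$ of diameter $\phi(x)$ produces points $x_1,x_2,\dots$ whose associated intervals are disjoint and whose triples $\tilde E_j$, of diameter $3\phi(x_j)\le 4\sup_i d_i$, cover $A$. To each $\tilde E_j$ I would then wish to assign a \emph{batch} $B_j$ of at least $b$ of the original intervals, all passing through $x_j$ and all of diameter at least $\phi(x_j)\ge \tilde d_j/4$; if these batches are pairwise disjoint then $\sum_i d_i^\rho \ge \sum_j |B_j|\,(\tilde d_j/4)^\rho \ge b\,4^{-\rho}\sum_j \tilde d_j^\rho$, which is precisely the claimed inequality.

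The hard part is that the batches need not be disjoint: a single very long interval can contribute to the multiplicity at many widely separated selected points, and one checks on simple configurations that a disjoint system of batches of size $b$ may genuinely fail to exist. The resolution must use the order structure of $\R$: if one long interval meets two selected points $x_j<x_{j'}$ it contains the whole segment $[x_j,x_{j'}]$, so the portion of $A$ produced by long intervals is itself an interval that can be covered by a single member of comparable diameter. I would therefore separate the intervals into dyadic scales and charge each selected point only to original intervals of \emph{comparable} scale; the interval property above bounds, for each fixed scale, the number of selected points to which a given interval can be charged by an absolute constant, while genuinely long intervals are absorbed into the large-content part of $\sum_i d_i^\rho$ rather than being charged at all. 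Carrying out this scale-by-scale bookkeeping so that the constants collapse to exactly $4^\rho$ and $4\sup_i d_i$ is where I expect the real work to lie; the rest is the routine Vitali selection and counting estimates sketched above.
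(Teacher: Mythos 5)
First, a point of comparison you could not have known: the paper contains no proof of this lemma at all --- it is quoted from Rams \cite{rams} --- so your attempt has to be judged on its own merits. On those merits, much of it is sound: the reduction to closed intervals via convex hulls is correct, the definition of $\phi(x)$ as the $b$-th largest diameter among the intervals through $x$ is a natural move, and your conditional computation is right --- if the batches $B_j$ were pairwise disjoint, the inequality would follow with exactly the stated constants. You have also correctly identified where this fails: long intervals contribute to the top-$b$ list at many selected points. But the repair you propose does not close the gap, for a structural reason you have not addressed: $\phi(x_j)$ being the $b$-th largest diameter guarantees only that the top $b$ intervals at $x_j$ have diameter \emph{at least} $\phi(x_j)$; all but one of them (the $b$-th itself) may be of vastly larger scale. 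So ``charge each selected point only to original intervals of comparable scale'' collects, in the worst case, a single payment per selected point where $b$ payments are needed --- the factor $b^{-1}$ is lost precisely at the step that is supposed to produce it.

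The problem is in fact worse than bookkeeping: the covering family you construct is itself wrong, so no charging scheme can validate it. Take $b=K+1$, the family consisting of $K$ essentially identical copies of $[0,1]$ together with $N=\lfloor 1/(2\epsilon)\rfloor$ pairwise disjoint intervals of length $\epsilon$ inside $[0,1]$. Then $A$ is exactly the union of the short intervals and $\phi\equiv\epsilon$ on $A$, so your Vitali selection at scale $\phi$ produces at least $N/3$ cover intervals of diameter $3\epsilon$. For $\rho=1/2$ the cover's $\rho$-content is at least $\frac{N}{3}(3\epsilon)^{1/2}\asymp\sqrt{N/6}$, whereas $4^{1/2}b^{-1}\sum_i d_i^{1/2}=\frac{2}{K+1}\bigl(K+N\sqrt{\epsilon}\,\bigr)\asymp\frac{2}{K+1}\sqrt{N/2}$, smaller by a factor of roughly $(K+1)/(2\sqrt{3})$, which exceeds $1$ once $K\geq 3$. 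The lemma itself survives because the \emph{correct} cover here is a single unit interval, paid for by the long intervals alone: wherever the multiplicity is carried by long intervals the cover must be rebuilt at the coarse scale, and deciding where to cover coarsely versus finely is the actual content of the lemma, which your sketch leaves entirely open. (Your remark that the long-interval portion of $A$ ``is itself an interval'' is also not right in general --- it is a union of intervals, possibly arbitrarily long chains needing to be chopped.) Note too that the obvious repairs fail quantitatively: in the single-scale case $d_i\in(D/2,D]$ your batching works verbatim (a left-to-right sweep with windows of length $2D$ gives genuinely disjoint batches and exactly the constants $4^{\rho}$ and $4\sup_i d_i$), but splitting a mixed family into dyadic scale classes and pigeonholing the multiplicity $b$ among them with weights $\theta_k$ costs an unbounded factor $\theta_k^{-1}$ at scale $k$, i.e.\ yields only a lossy variant (a worse constant or a loss in the exponent $\rho$), not the stated inequality. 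A minor further quibble: the countable-to-finite reduction for a covering statement of limsup type is not quite ``routine exhaustion'' either, since covers of the finite stages must be merged without inflating the bound.
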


For each $s$ we shall let
\begin{equation*}
  A(s):= \bigcup_{r\in \N}\bigcap_{m\in \N}\bigcup_{n \geq m}
  \bigcup_{k\geq 0}\left\lbrace \lambda \in
  \left(\frac{1}{2},\frac{2}{3}\right):
  P_n(\lambda,k,r)>4^n\lambda^{s(n+k)} \right\rbrace.
\end{equation*}
\begin{lemma}
  For all $s \in (0,1)$ we have $\dimH A(s)\leq s$.
\end{lemma}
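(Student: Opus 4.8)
The plan is to prove the stronger statement $\mathcal{H}^t(A(s)) = 0$ for every $t > s$, which gives $\dimH A(s) \leq s$. Since $A(s) = \bigcup_{r\in\N} A_r(s)$ is a countable union of the $\limsup$-type sets $A_r(s) := \bigcap_{m}\bigcup_{n\geq m}\bigcup_{k\geq 0} B_{n,k}$, where $B_{n,k} := \{\lambda : P_n(\lambda,k,r) > 4^n\lambda^{s(n+k)}\}$, it suffices to fix $r$ and produce, for each large $m$, a cover of $\bigcup_{n\geq m}\bigcup_{k\geq 0} B_{n,k}$ whose $t$-cost tends to $0$ as $m\to\infty$ and whose mesh also tends to $0$.

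To cover a single $B_{n,k}$ I will combine the intervals $I_n(\omega,\kappa,\gamma)$ with Lemma \ref{Ram's combinatorial lemma}. The mechanism is that if $\lambda\in B_{n,k}$ then $\lambda$ lies simultaneously in many of these intervals: each of the $P_n(\lambda,k,r)$ counted pairs $(\omega,\kappa)$ satisfies $\omega_1\neq\kappa_1$ and $|\sum_{i=1}^n(\omega_i-\kappa_i)\lambda^i|\leq r\lambda^{n+k}$, hence places $\lambda$ in $I_n(\omega,\kappa,\gamma)$ as soon as $\gamma\geq r\lambda^{n+k}$. By Lemma \ref{cor to Sh and Sol} each such interval has diameter at most $4\delta^{-1}\gamma$, valid once $\gamma<\delta/2$, which holds for $n\geq m$ with $m$ large; and there are at most $4^n$ pairs. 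Feeding these data, with a multiplicity threshold $b$ that lower bounds $P_n(\lambda,k,r)$, into Lemma \ref{Ram's combinatorial lemma} with $\rho=t$ yields a cover of $B_{n,k}$ of $t$-cost at most $4^t\,b^{-1}\,4^n(4\delta^{-1}\gamma)^t$.

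The crucial step, and the main obstacle, is to choose $\gamma$ and $b$ \emph{efficiently}. A naive global choice, bounding $\lambda^{s(n+k)}\geq 2^{-s(n+k)}$ via $\lambda>\tfrac12$ and taking $\gamma = r(2/3)^{n+k}$ via $\lambda<\tfrac23$, evaluates the multiplicity and the accuracy at opposite ends of $(\tfrac12,\tfrac23)$ and loses a factor, yielding only the useless bound $\dimH A(s)\leq s\log 2/\log(3/2)$. Instead I will partition $(\tfrac12,\tfrac23)$ into the $O(n+k)$ pieces on which $y:=\lambda^{n+k}$ lies in a fixed dyadic range $[2^{-(p+1)},2^{-p}]$. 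On such a piece I set $\gamma = r2^{-p}$, so every counted pair indeed gives $\lambda\in I_n(\omega,\kappa,\gamma)$, while simultaneously $\lambda^{s(n+k)}=y^s\geq 2^{-(p+1)s}$, so that an integer threshold $b$ of size $\asymp 4^n2^{-ps}$ is legitimate. Now the two quantities match: the factors $4^n$ cancel and the Rams cost on the piece is at most $C\,2^{-p(t-s)}$ with $C=C(t,s,\delta,r)$.

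It remains to sum. Over the pieces the exponents $p$ range over an arithmetic progression and $2^{-p(t-s)}$ is geometric with ratio $2^{-(t-s)}<1$ (here $t>s$ is used), so the sum over pieces is dominated by its largest term, giving a $t$-cost for $B_{n,k}$ of at most $C'(2/3)^{(t-s)(n+k)}$; the factor $O(n+k)$ is absorbed into the geometric sum. Finally $\sum_{n\geq m}\sum_{k\geq 0}(2/3)^{(t-s)(n+k)}$ is a convergent double geometric series tending to $0$ as $m\to\infty$, while the mesh of the cover is $O((2/3)^m)\to 0$. Hence $\mathcal{H}^t(A_r(s))=0$ for every $t>s$, so $\dimH A_r(s)\leq s$ and therefore $\dimH A(s)\leq s$. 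The essential point is that the transversality estimate of Lemma \ref{cor to Sh and Sol} must be applied at a scale $\gamma$ tied to the same $\lambda$ that makes $P_n$ large, which is exactly what the dyadic partition of the range of $\lambda^{n+k}$ accomplishes.
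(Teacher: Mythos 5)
Your proof is correct, and its skeleton is the paper's: decompose $A(s)$ into a countable union over $r$ of $\limsup$ sets, use Lemma~\ref{cor to Sh and Sol} (the Shmerkin--Solomyak transversality) to control the diameters of the proximity intervals $I_n(\omega,\kappa,\gamma)$, convert the multiplicity $P_n(\lambda,k,r)$ into a cheap cover via Lemma~\ref{Ram's combinatorial lemma}, and then sum geometrically over $k$ and over $n \geq m$. Where you genuinely depart from the paper is the dyadic pigeonholing of the range of $\lambda^{n+k}$, and this is a real improvement in rigour rather than mere bookkeeping: the paper applies Rams' lemma with threshold $b = \lceil 4^n\lambda^{s(n+k)}\rceil$ and interval scale $\gamma = r\lambda^{n+k}$, and its displayed covering costs, such as $(16r/\delta)^{\rho}\lambda^{(n+k)(\rho-s)}$, still contain the variable $\lambda$ that ranges over the very set being covered; strictly, Rams' lemma requires a fixed family $\{E_i\}$ and a fixed integer $b$, so the paper's proof as written elides a uniformization step. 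You correctly diagnose that the naive uniformization (taking $\lambda^{s(n+k)} \geq 2^{-s(n+k)}$ for the multiplicity and $\gamma = r(2/3)^{n+k}$ for the scale) mismatches the two ends of $\left(\frac{1}{2},\frac{2}{3}\right)$ and only yields $\dimH A(s) \leq s\log 2/\log(3/2)$, which is too weak for Theorem~\ref{theorem typical}; your partition into the $O(n+k)$ pieces on which $\lambda^{n+k} \in [2^{-(p+1)},2^{-p}]$ lets the threshold $b \asymp 4^n 2^{-ps}$ and the scale $\gamma = r2^{-p}$ refer to the same $\lambda$, the factor $4^n$ cancels, and the resulting sum over $p$ is geometric with ratio $2^{-(t-s)}<1$, exactly where $t>s$ enters. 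All the supporting details check out (the pairs counted by $P_n$ satisfy $\omega_1 \neq \kappa_1$ as Lemma~\ref{cor to Sh and Sol} requires, the condition $\gamma < \delta/2$ holds for $n \geq m$ with $m$ large since $r$ is fixed first, and the mesh of your covers tends to zero), so your argument both proves the lemma and repairs the one informal step in the paper's own proof.
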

\begin{proof}
  Choose $\rho > s$ and take some $r\in \N$. Take $n\in \N$ with
  $\lambda^{n}<\delta/2$. Note that each $\lambda \in
  \left(\frac{1}{2},\frac{2}{3}\right)$ with
  $P_n(\lambda,k,r)>4^n\lambda^{s(n+k)}$ is contained within
  $I_n(\omega,\kappa,r\cdot\lambda^{n+k})$ for at least $\lceil
  4^n\lambda^{s(n+k)} \rceil$ pairs $(\omega,\kappa)\in
  \left(\{0,1\}^n\right)^2$. Now by Lemma \ref{cor to Sh and Sol} each
  $I_n(\omega, \kappa,r\cdot\lambda^{n+k})$ has diameter not exceeding
  $4\delta^{-1}r\lambda^{n+k}$. Thus, by Lemma \ref{Ram's
    combinatorial lemma} we may cover
  \[
  \left\lbrace \lambda \in \left(\frac{1}{2},\frac{2}{3}\right):
  P_n(\lambda,k,r)>4^n\lambda^{s(n+k)} \right\rbrace
  \]
  with a family of sets $A_{i}^n(s,k)$ of diameter no greater than
  $16r\delta^{-1}\lambda^{n+k}$ and satisfying,
  \begin{align*}
    \sum_{i} & \diam(A_i^n(s,k))^{\rho} \leq 4^{\rho}\cdot
    (4^{-n}\lambda^{-s(n+k)}) \cdot \\ & \hspace{4cm} \cdot
    \sum_{(\omega,\kappa)\in \left(\{0,1\}\right)^n}
    \diam\left(I_n(\omega,\kappa,r\lambda^{n+k})\right)^{\rho}
    \\ &\leq (4r)^{\rho}\cdot (4^{-n}\lambda^{-s(n+k)}) \cdot 2^{2n}
    \cdot \left(4\delta^{-1}\lambda^{n+k}\right)^{\rho}\\ &\leq
    (16r/\delta)^{\rho} \lambda^{(n+k)(\rho-s)}.
  \end{align*}

Consequently, we may cover
\[
\bigcup_{k\geq 0}\left\lbrace \lambda \in
\left(\frac{1}{2},\frac{2}{3}\right):
P_n(\lambda,k,r)>4^n\lambda^{s(n+k)} \right\rbrace
\]
with sets $A_i^n(s,k)$ of diameter no greater than
$16r\delta^{-1}\lambda^n$ and satisfying,
\begin{eqnarray*}
  \sum_k\sum_{i}\diam(A_i^n(s,k))^{\rho} &\leq &
  (16r/\delta)^{\rho}\left(1-\lambda^{\rho-s}\right)^{-1}
  \lambda^{n(\rho-s)}.
\end{eqnarray*} 
It follows that for each $m\in \N$,
\[
\bigcup_{n\geq m}\bigcup_{k\geq 0}\left\lbrace \lambda \in
\left(\frac{1}{2},\frac{2}{3}\right):
P_n(\lambda,k,r)>4^n\lambda^{s(n+k)} \right\rbrace
\]
may be covered by a family of sets $\bigcup_{n\geq m}
\bigcup_k\bigcup_i A_i^n(s,k)$ of diameter not exceeding
$16r\delta^{-1}\lambda^{m}$ with
\begin{eqnarray*}
  \sum_{n\geq m} \sum_k\sum_{i}\diam(A_i^n(s,k))^{\rho} &\leq &
  (16r/\delta)^{\rho}\left(1-\lambda^{\rho-s}\right)^{-1} \cdot
  \sum_{n\geq m} \lambda^{n(\rho-s)} \\ &\leq &
  (16r/\delta)^{\rho}\left(1-\lambda^{\rho-s}\right)^{-2}
  \lambda^{m(\rho-s)}.
\end{eqnarray*}
For every $m\in \N$ we have,
\begin{align*}
  \bigcap_{m \in \N} & \bigcup_{n \geq m} \bigcup_{k\geq 0}\left\lbrace
  \lambda \in \left(\frac{1}{2},\frac{2}{3}\right):
  P_n(\lambda,k,r)>4^n\lambda^{s(n+k)} \right\rbrace \\ \subseteq
  & \bigcup_{n\geq m}\bigcup_{k\geq 0}\left\lbrace \lambda \in
  \left(\frac{1}{2},\frac{2}{3}\right):
  P_n(\lambda,k,r)>4^n\lambda^{s(n+k)} \right\rbrace.
\end{align*}
Thus,
\begin{equation*}
  \dimH\left(\bigcap_{m\in \N}\bigcup_{n \geq m} \bigcup_{k\geq
    0}\left\lbrace \lambda \in \left(\frac{1}{2},\frac{2}{3}\right):
  P_n(\lambda,k,r)>4^n\lambda^{s(n+k)} \right\rbrace\right) \leq \rho.
\end{equation*}
$A(s)$ is a countable union of such sets and so $\dimH A(s)\leq
\rho$. Since $\rho>s$ was arbitrary the Lemma holds.
\end{proof}

Let $\mathcal{D} = \{0,1\}$. For a natural number $n$ we denote by
$\mathcal{D}^n$ the set of words $(\omega_1, \omega_2, \ldots,
\omega_n)$ of length $n$ such that each $\omega_k$ is in
$\mathcal{D}$. Similarly we denote the set of all such infinite
sequences by $\Sigma$. If $\omega$ is an element of $\Sigma$ or
$\mathcal{D}^l$ with $l \geq n$, then we let $\omega | n$ denote the
element in $\mathcal{D}^n$ such that $\omega$ and $\omega | n$ are
equal on the first $n$ places.  Given an $\omega \in \mathcal{D}^{n}$
we define a function $g_\omega (x) = \sum_{i=1}^{n} \omega_i \lambda^i
+ \lambda^{n} x$.

\begin{lemma}\label{lots of cylinders} 
  Given a similarity map $f \colon \R \rightarrow \R$ defined by $f
  \colon x \mapsto rx+t$ for some fixed $r, t \in \R$, together with
  any closed interval $A$ with non-empty interior and $\diam(A) < r
  \diam\left(I_{\lambda}\right)$ there exists an integer $n(A,f) \in
  \Z$ and a finite string $\omega=\omega(A,f)\in \D^{\theta}$, with
  length $\theta$ depending only on the magnitude of the derivative
  $|f'|$ and the diameter $\diam(A)$ of $A$, such that the interval
  $f\left( g_{\omega}(I_{\lambda})+ n(A,f)\cdot
  \diam(I_{\lambda})\right)$ is contained within $A$ and has diameter
  at least $\lambda/4\cdot \diam(A)$.
\end{lemma}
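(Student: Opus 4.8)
The plan is to pull everything back through $f$ and reduce the claim to a purely metric statement about how finely the level-$\theta$ cylinders tile the line. Since $f$ is an invertible similarity with $f'=r$, writing $B:=f^{-1}(A)$ we have $\diam(B)=\diam(A)/|r|<\diam(I_{\lambda})$, and it suffices to find $\omega\in\mathcal{D}^{\theta}$ and $n\in\Z$ with $C:=g_\omega(I_{\lambda})+n\diam(I_{\lambda})\subseteq B$ and $\diam(C)\geq(\lambda/4)\diam(B)$; applying $f$ then returns an interval inside $A$ of diameter $|r|\diam(C)\geq(\lambda/4)\diam(A)$.

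The key geometric input I would isolate first is that, for $\lambda\in(\tfrac12,1)$, the level-$\theta$ cylinders cover their parent, $\bigcup_{\omega\in\mathcal{D}^{\theta}}g_\omega(I_{\lambda})=I_{\lambda}$. This is immediate at level one, since $g_{(0)}(I_{\lambda})=[0,\lambda^2/(1-\lambda)]$ and $g_{(1)}(I_{\lambda})=[\lambda,\diam(I_{\lambda})]$ overlap precisely when $\lambda\geq\tfrac12$, and it follows for general $\theta$ by self-similarity. Tiling $\R$ by the translates $I_{\lambda}+n\diam(I_{\lambda})$, $n\in\Z$, the left endpoints of the translated cylinders $g_\omega(I_{\lambda})+n\diam(I_{\lambda})$ form a bi-infinite discrete set $S_\theta$, every cylinder having the common length $g_\theta:=\lambda^{\theta}\diam(I_{\lambda})$. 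The covering property forces consecutive points of $S_\theta$ to differ by at most $g_\theta$: otherwise a point just beyond the right end of one cylinder would lie in no cylinder, contradicting that the cylinders cover each tile up to its right endpoint. Hence any closed interval of length at least $g_\theta$ contains a point of $S_\theta$.

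With this in hand I would take $\theta$ to be the least integer with $g_\theta\leq\diam(B)/2$; the hypothesis $\diam(B)<\diam(I_{\lambda})$ guarantees $\theta\geq1$ (so $\omega$ is a genuine word), while minimality gives $g_\theta>\tfrac{\lambda}{2}\diam(B)$, and $\theta$ depends only on $\diam(B)=\diam(A)/|f'|$, hence only on $\diam(A)$ and $|f'|$. Writing $B=[b,b+d]$ with $d=\diam(B)\geq 2g_\theta$, the sub-interval $[b,b+d-g_\theta]$ of admissible left endpoints has length $d-g_\theta\geq g_\theta$, so by the previous paragraph it contains some $\ell\in S_\theta$. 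The associated cylinder $C=[\ell,\ell+g_\theta]$ then satisfies $C\subseteq B$ and $\diam(C)=g_\theta>\tfrac{\lambda}{2}\diam(B)\geq\tfrac{\lambda}{4}\diam(B)$, and recording $\ell=\sum_{i=1}^{\theta}\omega_i\lambda^{i}+n\diam(I_{\lambda})$ produces the required $\omega=\omega(A,f)$ and $n=n(A,f)$.

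The only genuinely non-routine step is the covering and gap estimate of the middle paragraph; once it is in place the rest is a matter of selecting the scale $\theta$ correctly and bookkeeping. I would expect the extra factor of $4$ in the stated diameter bound, against the $\tfrac{\lambda}{2}$ the argument actually yields, to serve as harmless slack, and I do not anticipate any difficulty from reflections, since only $|f'|$ enters the diameter computations.
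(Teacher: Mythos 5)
Your proof is correct, but it locates the cylinder by a genuinely different mechanism than the paper does. The paper's proof first chooses $n(A,f)$ so that $Z=\left(f^{-1}(A)-n(A,f)\diam(I_{\lambda})\right)\cap I_{\lambda}$ has diameter at least $\frac{1}{2}\diam\left(f^{-1}(A)\right)$ (possible because $f^{-1}(A)$ meets at most two of the tiles $I_{\lambda}+n\diam(I_{\lambda})$), then takes the \emph{midpoint} $x$ of $Z$, invokes the existence of a $\lambda$-expansion $x=\bigcap_{n\in\N}g_{\omega|n}(I_{\lambda})$, and truncates at the explicit level $\theta=\big\lfloor \log\left((1-\lambda)\diam(A)/4|f'|\right)/\log\lambda \big\rfloor$: the cylinder through the midpoint has diameter less than $\frac{1}{2}\diam(Z)$, hence lies inside $Z$. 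You instead make the covering property $\bigcup_{\omega\in\D^{\theta}}g_{\omega}(I_{\lambda})=I_{\lambda}$ (valid exactly because $\lambda\geq\frac{1}{2}$) explicit and upgrade it to a gap estimate: the left endpoints of the translated level-$\theta$ cylinders form a net of $\R$ with gaps at most the common cylinder length $g_{\theta}=\lambda^{\theta}\diam(I_{\lambda})$, so any interval of length at least $2g_{\theta}$ contains an entire cylinder. The same covering fact underlies the paper's argument, but only implicitly, through the existence of $\lambda$-expansions of points of $I_{\lambda}$; your version isolates it and verifies it at level one ($g_{(0)}(I_{\lambda})$ and $g_{(1)}(I_{\lambda})$ overlap iff $\lambda\geq\frac{1}{2}$) plus self-similarity, which is a correct and complete justification. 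What your route buys: the two-tile/half-interval selection disappears (the translate $n(A,f)$ falls out of the choice of left endpoint $\ell$), and you obtain the sharper constant $\frac{\lambda}{2}\diam(A)$ where only $\frac{\lambda}{4}\diam(A)$ is demanded. What the paper's route buys: no separate net lemma is needed, only the standard fact that every point of $I_{\lambda}$ has a $\lambda$-expansion, and the midpoint trick pins the cylinder around a prescribed point, which is convenient when nesting such constructions. Both arguments produce $\theta$ as an explicit function of $|f'|$ and $\diam(A)$ alone, as the lemma requires, and your handling of orientation-reversing $f$ (only $|f'|$ enters) is sound.
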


\begin{proof}
  Since $\diam(A) < r \diam\left(I_{\lambda}\right)$,
  $\diam(f^{-1}(A)) < \diam\left(I_{\lambda}\right)$. Hence, the
  closed interval $f^{-1}(A)$ intersects at most two of the intervals
  \[
  \left\lbrace I_{\lambda}+n \diam(I_{\lambda})\right\rbrace_{n \in \Z}.
  \]
  As such, we may choose $n(A,f) \in \Z$ so that,
  \[
  \diam \left(f^{-1}(A) \cap \left(I_{\lambda}+n(A,f)
  \diam(I_{\lambda})\right)\right) \geq \frac{1}{2} \cdot
  \diam\left(f^{-1}(A)\right).
  \]
  Equivalently, $\diam \left(Z\right) \geq \frac{1}{2} \cdot
  \diam\left(f^{-1}(A)\right)$ where
  \[
  Z=\left(f^{-1}(A)-n(A,f) \diam(I_{\lambda}) \right) \cap
  I_{\lambda}.
  \]
  Let $x$ denote the midpoint of $Z$. Since $x\in I_{\lambda}$ we may
  write $x = \sum_{i=1}^{\infty}\omega_i\lambda^i =
  \bigcap_{n\in\N}g_{\omega|n}(I_{\lambda})$. We choose $\theta$ so
  that
  \begin{equation*}
    \theta:= \bigg\lfloor
    \frac{\log\left((1-\lambda)\diam(A)/4|f'|\right)}{\log
      \lambda}\bigg\rfloor.
  \end{equation*}
  In particular, $\theta$ depends only upon the magnitude of the
  derivative $|f'|$ and the diameter $\diam(A)$ of $A$. Since $f$ is a
  similarity and $I_{\lambda}$ is of diameter $\lambda/(1-\lambda)$,
  it follows that
  \begin{eqnarray*}
    \diam\left(g_{\omega|\theta}(I_{\lambda})\right) &=&
    \frac{\lambda^{\theta+1}}{1-\lambda}\\ &<&
    \frac{\diam(A)}{4r}\\ &=& \frac{1}{2}\cdot
    \frac{\diam(f^{-1}(A))}{2}\\ &<&\frac{1}{2}\cdot \diam(Z).
  \end{eqnarray*}
  Since $x$ is the midpoint of $Z$ and
  $g_{\omega|\theta}(I_{\lambda})$ contains $x$ we have
  \[
  g_{\omega|\theta}(I_{\lambda}) \subseteq Z \subseteq
  f^{-1}(A)-n(A,f) \cdot \diam(I_{\lambda}).
  \]
  Hence,
  \[
  f\left(g_{\omega|\theta}(I_{\lambda})+n(A,f) \cdot
  \diam(I_{\lambda})\right) \subseteq A.
  \]
  Moreover,
  \begin{eqnarray*}
    \diam\left(g_{\omega|\theta}(I_{\lambda})\right) &=&
    \frac{\lambda^{\theta+1}}{1-\lambda}\\ &\geq& \lambda \cdot
    \frac{\diam(A)}{4r}.
  \end{eqnarray*}
  Thus,
  \begin{equation*}
    \diam\left(f\left(g_{\omega|\theta}(I_{\lambda})+n(A,f) \cdot
    \diam(I_{\lambda})\right) \right)\geq \frac{\lambda}{4}\cdot
    \diam(A). \qedhere
  \end{equation*}
\end{proof}

Given a positive number $r>0$ and a finite set $\Omega$ and two
functions $\varphi_1, \varphi_2: \Omega \rightarrow \R$ we shall let
\begin{equation*}
  N_r(\varphi_1,\varphi_2):=\#\left\lbrace (x,y)\in \Omega^2 :
  |\varphi_1(x)-\varphi_2(y)|\leq r\right\rbrace.
\end{equation*}

\begin{lemma}\label{translation lemma}
Given $r > 0$, any finite set $\Omega$, any function $\varphi \colon
\Omega \rightarrow \R$ and any $t \in \R$, we have $N_r
(\varphi,\varphi+t) \leq 4 \cdot N_r (\varphi,\varphi)$.
\end{lemma}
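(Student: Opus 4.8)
The plan is to reduce the assertion to a counting inequality about the multiset of values $\{\varphi(x) : x \in \Omega\}$, since by definition
\[
N_r(\varphi,\varphi) = \#\{(x,y)\in\Omega^2 : |\varphi(x)-\varphi(y)|\le r\},
\]
while $N_r(\varphi,\varphi+t)$ counts the ordered pairs with $\varphi(x)-\varphi(y)\in[t-r,t+r]$. Thus both quantities depend only on how the values of $\varphi$ are distributed, and a windowing argument should suffice. First I would partition $\R$ into the half-open blocks $I_m=[mr,(m+1)r)$ for $m\in\Z$ and set $n_m:=\#\{x\in\Omega:\varphi(x)\in I_m\}$, together with $m(x)$ for the index of the block containing $\varphi(x)$.

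The heart of the argument is a discretisation of the two distance conditions. For the lower bound, any pair with $m(x)=m(y)=m$ automatically satisfies $|\varphi(x)-\varphi(y)|<r$, so counting such diagonal pairs gives $N_r(\varphi,\varphi)\ge\sum_m n_m^2$. For the upper bound, I would write $t=\tau r$ and group the pairs counted by $N_r(\varphi,\varphi+t)$ according to their index difference $d=m(x)-m(y)$; a short interval-intersection computation shows that a counted pair forces $d\in(\tau-2,\tau+2)$. This yields
\[
N_r(\varphi,\varphi+t)\le\sum_{d\,\in\,(\tau-2,\tau+2)}\ \sum_m n_m\,n_{m-d},
\]
and a reindexed Cauchy--Schwarz inequality bounds each inner sum by $\sum_m n_m n_{m-d}\le\sum_m n_m^2$.

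To finish, I would note that an open interval of length $4$ contains at most $4$ integers, so the outer sum has at most four terms and
\[
N_r(\varphi,\varphi+t)\le 4\sum_m n_m^2\le 4\,N_r(\varphi,\varphi),
\]
which is exactly the claimed inequality.

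The step that needs the most care — and the only place where the constant $4$ genuinely enters — is the bookkeeping of the index window. I must check precisely that a counted pair forces $\tau-2<d<\tau+2$, which comes from demanding that the closed interval $[(\tau-1)r,(\tau+1)r]$ meet the open interval $((d-1)r,(d+1)r)$, and that such a window contains at most four integers; here the half-open convention on the blocks $I_m$ and the strictness of the resulting inequalities are what keep the count at four rather than five. By contrast, the Cauchy--Schwarz estimate and the diagonal lower bound are routine once the discretisation is in place, so I expect essentially all the difficulty to lie in this combinatorial counting of admissible index shifts.
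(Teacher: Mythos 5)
Your proof is correct and follows essentially the same route as the paper's: both discretise $\R$ into half-open blocks of length $r$ (the paper first rescales to $r=1$), bound $N_r(\varphi,\varphi)$ below by the diagonal sum $\sum_m n_m^2$, confine the block-index shift of a counted pair to a window containing at most four integers, and conclude with Cauchy--Schwarz applied to $\sum_m n_m n_{m-d}$. The only cosmetic difference is that you keep $r$ general and phrase the window as the open interval $(\tau-2,\tau+2)$, whereas the paper writes it as $\lfloor t\rfloor-1\leq p\leq\lceil t\rceil+1$.
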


\begin{proof}[Proof of Lemma \ref{translation lemma}]
  Since the inequality $|\varphi_1(x)-\varphi_2(y)|\leq r$ holds if
  and only if $|\varphi_1(x)/r-\varphi_2(y)/r| \leq 1$, it is
  sufficient to prove the lemma in the case $r=1$.

  For each $n \in \Z$ we let $a_{n}:= \# \left(\Omega \cap
  \varphi^{-1}[n,n+1)\right)$. Given any pair $(a,b) \in \Omega^2$
    with $\varphi(a),\varphi(b) \in [n, n+1)$ for some $n \in \Z$ we
      have $|\varphi(a)-\varphi(b)| \leq 1$. For each $n\in \Z$ there
      are at least $a_n^2$ such pairs, so $N_1 (\varphi,\varphi) \geq
      \sum_{n \in \Z} a_n^2$.

  Now suppose $a, b \in \Omega$, $\varphi(a) \in [n, n+1)$,
    $|\varphi(a)-(\varphi(b)+t)| \leq 1$. Since $n \leq \varphi(a)
    <n+1$, so $n-1 \leq \varphi(b)+t < n+2$, and so
  \[
  n-(\lceil t\rceil+1) \leq n-1-t \leq \varphi(b) < n+2-t<(n-(\lfloor
  t \rfloor-1)) + 1.
  \]
  Hence, $\varphi(b)$ is in $[n-p, n-p+1)$ for some integer $p$ with
    $\lfloor t \rfloor-1 \leq p \leq \lceil t\rceil+1$. Thus, for each
    $a \in \Omega$ with $\varphi(a) \in [n, n+1)$ we have
  \begin{align*}
    \#\{\, b \in \Omega &: |\varphi(a)-(\varphi(b)+t)| \leq
    1 \,\} \\
    &\leq \sum_{\lfloor t \rfloor-1 \leq p \leq \lceil t\rceil+1} \#
    \left(\Omega \cap \varphi^{-1}[n-p,n-p+1)\right) \\ &=
      \sum_{\lfloor t \rfloor-1 \leq p \leq \lceil t\rceil+1} a_{n-p}.
  \end{align*}
  Thus, for each $n \in \N$,
  \begin{multline*}
    \#\left\lbrace (a,b) \in \Omega^2: \varphi(a) \in [n, n+1),
      |\varphi(a)-(\varphi(b)+t)| \leq 1 \right\rbrace \\ \leq
      \sum_{\lfloor t \rfloor-1 \leq p \leq \lceil t\rceil+1} a_n
      \cdot a_{n-p}.
  \end{multline*}
  Hence, $N_1 (\varphi, \varphi+t)\leq \sum_{n \in \Z} \sum_{\lfloor t
    \rfloor-1 \leq p \leq \lceil t\rceil+1}a_n a_{n-p}$.  Thus, by
  Cauchy--Schwarz we have,
  \begin{align*}
    N_1 (\varphi, \varphi+t)& \leq \sum_{\lfloor t \rfloor-1 \leq p \leq
      \lceil t\rceil+1}\sum_{n \in \Z} a_n a_{n-p}\\ & \leq
    \sum_{\lfloor t \rfloor-1 \leq p \leq \lceil
      t\rceil+1}\sqrt{\sum_{n \in \Z} a_n^2 \cdot \sum_{n \in \Z}
      a_{n-p}^2}\\ & \leq \sum_{\lfloor t \rfloor-1 \leq p \leq \lceil
      t\rceil+1}\sum_{n \in \N} a_n^2 \\ & \leq 4
    N_1 (\varphi,\varphi). \qedhere
  \end{align*}
\end{proof}
\begin{remark}
It is natural to ask whether or not $4$ is the optimal constant possible in Lemma \ref{translation lemma}. Matthew Aldridge has provided an inductive demonstration that $N_r(\varphi,\varphi+t) < 2 \cdot N_r (\varphi,\varphi)$, whilst Oliver Roche-Newton has produced a family of counterexamples showing that such a bound is optimal.
\end{remark}

\begin{lemma}\label{Constant depending on r}
  Suppose $\lambda \notin A(s)$ and $r\in \N$. Then there exists a
  constant $C(r)>0$, such that for all $n \in \N$
  and all $k\in \N\cup\{0\}$,
  \begin{equation*}
    \tilde{P}_n(\lambda,k,r)\leq C(r) \cdot 2^n+4^{n}n\lambda^{s(n+k)}.
  \end{equation*}
\end{lemma}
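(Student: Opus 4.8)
The plan is to unwind the definition of $A(s)$ and feed the resulting control on $P_l(\lambda,k,r)$ into Lemma \ref{proximity inequality}. Since $\lambda\notin A(s)$, for the given $r$ the point $\lambda$ does not lie in $\bigcap_{m}\bigcup_{n\geq m}\bigcup_{k\geq 0}\{\lambda:P_n(\lambda,k,r)>4^n\lambda^{s(n+k)}\}$, so there is a threshold $m=m(r)$ with
\[
P_l(\lambda,k,r)\leq 4^l\lambda^{s(l+k)}\qquad\text{for all }l\geq m(r)\text{ and }k\geq 0.
\]
This is the only place where the hypothesis enters, and the dependence of $m(r)$ on $r$ is exactly what forces the constant in the conclusion to depend on $r$.

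I would then split according to the size of $n$. For $n<m(r)$ I use only the crude bound $\tilde P_n(\lambda,k,r)\leq 4^n$ (the total number of pairs); since $4^n=2^n\cdot 2^n\leq 2^{m(r)}\cdot 2^n$, this is absorbed into the $C(r)\cdot 2^n$ term. For $n\geq m(r)$ I apply Lemma \ref{proximity inequality} and break the sum $\sum_{l=1}^{n}2^{n-l}P_l$ at $l=m(r)$. The low range $1\leq l<m(r)$ is controlled crudely by $\sum_{l<m(r)}2^{n-l}4^l\leq 2^{m(r)}\cdot 2^n$, which again feeds the $C(r)\cdot 2^n$ term. It is essential here to use the \emph{good} estimate on the high range: applying the crude bound $P_l\leq 4^l$ all the way to $l=n$ would produce a term of order $4^n$, which is far too large.

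For the high range $m(r)\leq l\leq n$, substituting the estimate of the first paragraph gives
\[
\sum_{l=m(r)}^{n}2^{n-l}P_l(\lambda,k,r)\leq 2^n\lambda^{sk}\sum_{l=m(r)}^{n}(2\lambda^s)^l,
\]
so everything reduces to the geometric sum $\sum_l(2\lambda^s)^l$, and this is where the factor $n$ in the statement is spent. When $2\lambda^s\geq 1$ each summand is at most $(2\lambda^s)^n$ and there are at most $n$ of them, giving $\sum\leq n(2\lambda^s)^n$ and hence exactly $4^n n\lambda^{s(n+k)}$ after multiplying through by $2^n\lambda^{sk}$. When $2\lambda^s<1$ the sum is bounded by the convergent series $(1-2\lambda^s)^{-1}$, a constant, which together with $\lambda^{sk}\leq 1$ contributes only a further multiple of $2^n$.

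Collecting the pieces yields the claim: when $2\lambda^s\geq 1$ one takes $C(r)=1+2^{m(r)}$, and when $2\lambda^s<1$ one takes $C(r)=1+2^{m(r)}+(1-2\lambda^s)^{-1}$, using that the nonnegative term $4^n n\lambda^{s(n+k)}$ may be added freely in the latter case. The only genuinely delicate point is matching the coefficient $1$ on $4^n n\lambda^{s(n+k)}$, and the proof works precisely because in the borderline regime $2\lambda^s\geq 1$ the geometric sum is dominated by its largest term with multiplicity at most $n$, so no extra constant is incurred.
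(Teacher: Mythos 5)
Your proposal is correct and follows essentially the same route as the paper: unwind $\lambda \notin A(s)$ to get a threshold $m(r)$ (the paper's $N_0$), split the sum in Lemma \ref{proximity inequality} at that threshold, absorb the low range into $C(r)\cdot 2^n$, and bound the high range by $n$ times the largest term $(4\lambda^s)^n\lambda^{sk}=4^n\lambda^{s(n+k)}$, which is exactly the paper's use of $4\lambda^s \geq 2$ (your condition $2\lambda^s\geq 1$ in fact always holds here since $\lambda>\frac{1}{2}$ and $s<1$, so your second case is vacuous). The only cosmetic difference is that you bound the low range by the crude $P_l\leq 4^l$ while the paper keeps $\sum_{l\leq N_0}2^{-l}P_l(\lambda,0,r)$ using monotonicity in $k$; both yield a valid $C(r)$.
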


\begin{proof}
  Suppose $\lambda\notin A(s)$ and $r\in \N$. Then there exists some
  $N_0\in \N$ such that for all $n\geq N_0$ and all $k\in \N\cup
  \{0\}$, $P_n(\lambda,k,r) \leq 4^n\lambda^{s(n+k)}$. Thus, if we
  take $C:=1+\sum_{l=1}^{N_0}2^{-l}P_l(\lambda,0,r)$ then by Lemma
  \ref{proximity inequality} then we have,
  \begin{eqnarray*}
    \tilde{P}_n(\lambda,k,r)&\leq&
    2^n+\sum_{l=1}^{n}2^{n-l}P_l(\lambda,k,r)\\ &\leq&
    2^n\left(1+\sum_{l=1}^{N_0}2^{-l}P_l(\lambda,k,r)\right)+
    \lambda^{sk} \cdot \sum_{l=N_0+1}^n 2^{n-l}\cdot
    (4\lambda^s)^l\\ &\leq&
    2^n\left(1+\sum_{l=1}^{N_0}2^{-l}P_l(\lambda,0,r)\right)+
    \lambda^{sk} \cdot \sum_{l=N_0+1}^n (4\lambda^s)^n\\ &\leq& C\cdot
    2^n+ 4^{n}n\lambda^{s(n+k)},
  \end{eqnarray*}
  where we used the fact that $\lambda \geq \frac{1}{2}$, so $4
  \lambda^s\geq 2$.
\end{proof}

\begin{prop}
  Suppose $\lambda \notin A(s)$ for some $s\leq
  \frac{1}{\alpha}$. Then $W_{\lambda}(\alpha) \in \G^s(I_{\lambda})$.
\end{prop}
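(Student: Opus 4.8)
The plan is to verify directly that $\tilde W := W_\lambda(\alpha)+\diam(I_\lambda)\cdot\Z$ satisfies the two defining properties of membership in $\G^s$, from which $W_\lambda(\alpha)\in\G^s(I_\lambda)$ follows by definition. That $\tilde W$ is a $G_\delta$ set is immediate, since its defining $\limsup$ condition exhibits $\tilde W$ as a countable intersection of open subsets of $\R$. It remains to show that for every countable family of similarities $\{f_j\}_{j\in\N}$ one has $\dimH\bigl(\bigcap_{j\in\N}f_j(\tilde W)\bigr)\geq s$. I would fix such a family, fix an enumeration $j(1),j(2),\dots$ of $\N$ in which every index recurs infinitely often, and construct an explicit Cantor subset $E\subseteq\bigcap_{j}f_j(\tilde W)$ with $\dimH E\geq s$.

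The construction produces a nested sequence of finite collections of closed intervals $\mathcal E_0,\mathcal E_1,\dots$, each interval of $\mathcal E_t$ lying inside one of $\mathcal E_{t-1}$, with $E=\bigcap_t\bigcup_{A\in\mathcal E_t}A$. At step $t$, for each $A\in\mathcal E_{t-1}$ I first apply Lemma~\ref{lots of cylinders} to the similarity $f_{j(t)}$ and the interval $A$, obtaining a cylinder $C_A=g_\omega(I_\lambda)+n_A\diam(I_\lambda)$ of some prefix length $\theta=\theta(A,t)$ whose image satisfies $f_{j(t)}(C_A)\subseteq A$ and $\diam f_{j(t)}(C_A)\geq\tfrac{\lambda}{4}\diam A$. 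Inside $C_A$ the finite $\lambda$-expansions reproduce the entire structure of $I_\lambda$: for any $m$, the level-$(\theta+m)$ sums extending $\omega$ are the points $c_\kappa:=g_\omega\bigl(\sum_{i\leq m}\kappa_i\lambda^i\bigr)+n_A\diam(I_\lambda)$ with $\kappa\in\{0,1\}^m$, and each ball $B\bigl(c_\kappa,2^{-(\theta+m)\alpha}\bigr)$ lies in $\tilde W$. The children of $A$ will be the images under $f_{j(t)}$ of such balls as $\kappa$ ranges over a well-separated subset $S\subseteq\{0,1\}^m$, so that every child lies inside $A$ and inside $f_{j(t)}(\tilde W)$. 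Crucially, because each index recurs infinitely often, any point $x\in E$ lies, for each fixed $j$ and for infinitely many recurrence times $t$ with $j(t)=j$ and $\theta+m\to\infty$, inside one of these balls; hence $f_j^{-1}(x)$ is $2^{-(\theta+m)\alpha}$-close to a level-$(\theta+m)$ sum for infinitely many levels, so $f_j^{-1}(x)\in\tilde W$ and $E\subseteq f_j(\tilde W)$.

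To select $S$, I would match the separation scale to the ball radius. Writing $N=\theta+m$, two centres $c_\kappa,c_{\kappa'}$ are closer than $r\cdot2^{-N\alpha}$ precisely when $\bigl|\sum_{i\leq m}(\kappa_i-\kappa'_i)\lambda^i\bigr|\leq r\lambda^{m+k}$, where $k\geq0$ is the integer determined by $\lambda^{m+k}=\lambda^{-\theta}2^{-N\alpha}$; this $k$ is nonnegative because $\alpha>1>-\log\lambda/\log2$. Thus the number of proximate pairs among the $c_\kappa$ is exactly $\tilde P_m(\lambda,k,r)$, up to the bounded factor furnished by Lemma~\ref{translation lemma}, which absorbs the integer translates in $\tilde W$ and the translation part of $f_{j(t)}$. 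Since $\lambda\notin A(s)$, Lemma~\ref{Constant depending on r} gives $\tilde P_m(\lambda,k,r)\leq C(r)2^m+4^m m\,\lambda^{s(m+k)}$, and a Tur\'{a}n-type extraction produces a separated set $S$ with $\#S$ at least a constant times $4^m/\tilde P_m(\lambda,k,r)$; taking $r\geq3$ ensures the chosen balls are pairwise separated by at least $2^{-N\alpha}$, so their $f_{j(t)}$-images are disjoint with definite gaps, permitting the recursion to continue.

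It remains to estimate the dimension. A direct computation with $\lambda^{m+k}=\lambda^{-\theta}2^{-N\alpha}$ shows, for $s\leq\tfrac1\alpha$, that the dominant term in the bound on $\tilde P_m$ is the second one once $m$ is large, and that $\log\#S\geq s\cdot\log\bigl(\diam A/\diam(\text{child})\bigr)-O(\log m)$, whence the local branching exponent exceeds $s-O(\log m/m)$. At step $t$ I would therefore choose a single integer $m_t$, large relative to the finitely many values $\theta(A,t)$ with $A\in\mathcal E_{t-1}$, so that this exponent is at least $s-\varepsilon_t$ with $\varepsilon_t\to0$. Equipping $E$ with the mass distribution that splits each parent's mass equally among its children and invoking the mass distribution principle together with the uniform separation of the children then gives $\dimH E\geq\liminf_t(s-\varepsilon_t)=s$. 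The main obstacle is precisely this matching of scales: the joint choice of $k$, $r$ and $m_t$ must place Lemma~\ref{Constant depending on r} at exactly the resolution of the level-$N$ balls and dominate the $\theta$-dependent factors as $m_t\to\infty$, while the recurrence of each index and Lemma~\ref{translation lemma} keep the construction faithful both to the $\limsup$ nature of $\tilde W$ and to the core proximity count $\tilde P_m$.
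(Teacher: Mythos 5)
Your construction follows the paper's own proof quite closely up to the decisive step: the same reduction to $\bigcap_j f_j(W_\lambda(\alpha)+\diam(I_\lambda)\cdot\Z)$, a schedule $j(t)$ hitting every index infinitely often, Lemma~\ref{lots of cylinders} to re-seed cylinders inside each parent, the scale-matching choice of $k$ with $\lambda^{m+k}\asymp\lambda^{-\theta}2^{-N\alpha}$, and Lemma~\ref{Constant depending on r} for the proximity count. But your final step --- Tur\'an extraction of a separated set $S$ with $\#S\gtrsim 4^m/\tilde{P}_m$, equal splitting of mass, and the mass distribution principle --- has a genuine gap at the scales \emph{intermediate} between $\diam(A)$ and the child diameter. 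Two facts conspire against you. First, the per-step scale ratio is forced to be unbounded: by Lemma~\ref{lots of cylinders}, $\theta(A,t)\asymp\log(1/\delta_{t-1})$, and at the endpoint $s=1/\alpha$ the deficit in $\log\#S$ is not $O(\log m)$ as you claim but of order $\theta$ (one has $\log_2(\lambda^{-s(m+k)})=m+c_\lambda\theta$ with $c_\lambda=1-\frac{\log_2(1/\lambda)}{\alpha}>0$, while $\#S\leq 2^m$ always), so your own repair ``$m_t$ large relative to $\theta$'' forces $m_t\gg\theta_t\asymp\log(1/\delta_{t-1})$, hence $L_t:=\log(1/\delta_t)\gg L_{t-1}$ along infinitely many steps. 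Second, Tur\'an extraction guarantees separation only at the single finest scale $r\lambda^{m+k}$; nothing prevents the extracted points from clumping at coarser scales $\lambda^{m+k'}$, $k'<k$, and the pair bounds $\tilde{P}_m(\lambda,k',r)$ convert to pointwise ball counts only with a square-root loss ($M$ points in one interval force $M^2$ close pairs, so $M\lesssim 2^m\sqrt{m}\,\lambda^{s(m+k')/2}$, local exponent $s/2$). Consequently the gap-based lower bound for such nested constructions (\cite[Example~4.6]{falconerbook}) gives only $\liminf_t\frac{\log(m_1\cdots m_{t-1})}{-\log(m_t\epsilon_t)}\approx\liminf_t\frac{sL_{t-1}}{sL_{t-1}+(1-s)L_t}$, which tends to $0$ under the forced growth $L_t/L_{t-1}\to\infty$; and for a clumped $S$ the equal-split measure genuinely violates $\mu(J)\lesssim|J|^s$ at intermediate scales, since an interval of relative length $\lambda^{m+k'}$ may carry relative mass $\lambda^{k'-k}\lambda^{s(m+k)}=\lambda^{s(m+k')}\cdot\lambda^{(1-s)(k'-k)}\gg\lambda^{s(m+k')}$.

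This is precisely why the paper does \emph{not} extract a separated subset. It keeps the entire binary tree at every intermediate level, pushes forward the $(\frac12,\frac12)$-Bernoulli measure to $\nu=\mu\circ\pi^{-1}$, and bounds the \emph{correlation} dimension: the integral $\int\nu(B_r(x))\,d\nu(x)$ is itself a pair count, so the proximity numbers control it directly at every radius $r$ with no pointwise (Frostman-type) conversion and no square-root loss. Lemma~\ref{first estimate for t close} controls close pairs at every intermediate level $l\leq\gamma_{q+1}$ of the current step, Lemma~\ref{second estimate for r close} splices in the previous step's $\gamma_q$-block, and the endpoint losses (the inequality $2^{-\gamma_q}\leq(2^{-\alpha\gamma_q})^s$ and the $\theta_q$-dependent factors) are absorbed into the correction $\iota(q)\to0$ via the choice $\gamma_{q+1}>q\gamma_q\theta_{q+1}(-\log\delta_{\Gamma(q)})$. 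A smaller point: within a single parent your centres $c_\kappa$ share the same prefix $\omega$ and the same integer translate, so their differences are exactly $\lambda^{\theta}\sum_i(\kappa_i-\kappa'_i)\lambda^i$ and Lemma~\ref{translation lemma} is not needed where you invoke it; in the paper it is needed exactly for the cross-parent comparisons in the correlation integral --- comparisons your scheme avoids, at the cost of the intermediate-scale control described above. To repair your argument you would in effect have to certify that $S$ is $s$-regular across all scales, which pair counts alone cannot do; the energy/correlation route is the natural fix, and it is the paper's.
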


\begin{proof}
  To prove the proposition we begin by fixing $\lambda\notin A(s)$,
  $\alpha>1$ and a sequence of similarity maps $\{f_j\}_{j\in \N}$. We
  shall show that
  \begin{equation*}
    \dimH\left( \bigcap_{j\in\N}
    f_j\left(W_{\lambda}(\alpha)+\diam(I_{\lambda})\cdot \Z \right)\right)\geq s.
  \end{equation*}
  To do so we shall construct a subset $\Lambda \subset
  \bigcap_{j\in\N} f_j\left(W_{\lambda}(\alpha)+\diam(I_{\lambda})\cdot \Z \right)$ supporting a
  measure $\nu$ with correlation dimension $s$. Without loss of generality we may assume that $f_1:x \mapsto 2x$. We begin by choosing a sequence of natural numbers $(j(q))_{q\in \N \cup\{0\}}$ so that $j(0)=1$ and for each
  $k\in \N$,
  \begin{equation}\label{Infinite k}
    \#\{\, q : j(q) = k \,\} = \infty.
  \end{equation}

  Let $\Sigma_* = \{\emptyset\} \cup_n \mathcal{D}^n$.  We shall
  recursively construct sequences of integers $(\gamma_q)_{q \in \N}$,
  $(\hat{\gamma}_q)_{q \in \N}$, $(\theta_q)_{q \in \N}$ and
  $(m_q)_{q\in \N}$ along with closed intervals
  $(\Delta_{\omega})_{\omega\in \Sigma_*}$ and
  $(\hat{\Delta}_{\omega})_{\omega\in \Sigma_*}$ and positive reals
  $(\delta_n)_{n\in \N\cup\{0\}}$, $(\hat{\delta}_n)_{n\in
    \N\cup\{0\}}$ with the property that for any $\omega \in \Sigma_*$
  and $\eta \in \D$,
  \begin{equation*}
    \Delta_{\omega}\supseteq \hat{\Delta}_{\omega}\supseteq
    \Delta_{\omega\eta}.
  \end{equation*}
  Moreover, given any word $\omega \in \D^n$ for some $n \in \N\cup
  \{0\}$ we have $\diam(\Delta_{\omega})=\delta_n$ and
  $\diam(\hat{\Delta}_{\omega})=\hat{\delta}_n$. We also have
  $\hat{\delta}_n\leq \delta_n \leq \lambda^{n+1}/(1-\lambda)$ for all
  $n\in \N\cup\{0\}$. In addition, $\lambda^{\gamma_q} < \lVert
  f_{j(q)}' \rVert_{\infty}$ for $q \geq 1$.

  First let $\gamma_0=\hat{\gamma}_0=\theta_0=m_0=0$,
  $\Delta_{\emptyset}=\hat{\Delta}_{\emptyset}=I_{\lambda}$ and
  $\delta_0=\hat{\delta}_0=\lambda/(1-\lambda)$.

  Suppose we have chosen $\gamma_l$, $\theta_l$ and $m_l$ for $l\leq
  q$ and for all $n \leq \Gamma (q) := \sum_{l\leq q}\gamma_l$ we have
  defined $\delta_n$, $\hat{\delta}_n$ and for $\omega\in \D^n$ we
  have $\Delta_{\omega}$ and $\hat{\Delta}_{\omega}$, all satisfying
  the required properties.

  For the inductive step we first apply Lemma \ref{lots of cylinders}
  to obtain $\left(\omega(\kappa)\right)_{\kappa \in
    \D^{\Gamma (q)}}$ and $\left(n(\kappa)\right)_{\kappa \in
    \D^{\Gamma (q)}}$ with $n(\kappa)=n  (\hat{\Delta}_{\kappa},f_{j(q)}) \in \Z$ and $\omega(\kappa)=\omega
  (\hat{\Delta}_{\kappa},f_{j(q)}) \in \Sigma_*$ for each $\kappa \in
  \D^{\Gamma (q)}$ so that,
  \begin{itemize}
  \item[(1)] $f_{j(q)}\left( g_{\omega(\kappa)}(I_{\lambda})+n(\kappa)\diam(I_{\lambda})\right) \subseteq
    \hat{\Delta}_{\kappa}$,
  \item[(2)] $\diam\left(f_{j(q)}\left(g_{\omega(\kappa)}(I_{\lambda})+n(\kappa)\diam(I_{\lambda})\right)\right) \geq \frac{\lambda}{4}\cdot
    \diam (\hat{\Delta}_{\kappa})$.
  \end{itemize} 

  By supposition, $\diam (\hat{\Delta}_{\kappa}) = \delta_{\Gamma (q)}$ for all $\kappa \in \D^{\Gamma (q)}$. Consequently,
  by Lemma \ref{lots of cylinders} the length of $|\omega(\kappa)|$ is
  uniform over all $\kappa \in \D^{\Gamma (q)}$. We denote
  this uniform length by $\theta_{q+1}$.

  Choose $\gamma_{q+1},\hat{\gamma}_{q+1} \in \N$ so that,
  \begin{eqnarray*}
    \gamma_{q+1}&>& q\gamma_q\theta_{q+1} \cdot (-\log
    \delta_{\Gamma (q)}),\\
    \gamma_{q+1}&>& \frac{\log |f'_{j(q+1)}|}{\log \lambda}
    ,\\ \hat{\gamma}_{q+1}&=&\gamma_{q+1}+\theta_{q+1}.
  \end{eqnarray*}
  and let
  \begin{equation*}
    m_{q+1}:=\bigg\lfloor \left(\frac{\log 2^{-\alpha}}{\log
      \lambda}-1\right)\hat{\gamma}_{q+1} -\frac{\log(1-\lambda)}{\log \lambda}\bigg\rfloor+1,
  \end{equation*}
  so that
  \begin{equation}\label{m q inequalities}
    \lambda^{\hat{\gamma}_{q+1}+m_{q+1}}/(1-\lambda)< 2^{-\alpha
      \hat{\gamma}_{q+1}} \leq \lambda^{\hat{\gamma}_{q+1}+m_{q+1}-1}/(1-\lambda).
  \end{equation}

  Given $\kappa \in \D^{\Gamma (q)}$ and $\tau\in \D^{l}$
  for some $l\leq \gamma_{q+1}$ we define
  \begin{equation*}
    \Delta_{\kappa \tau}:=f_{j(q)}\left(g_{\omega(\kappa)}\circ
    g_{\tau}(I_{\lambda})+n(\kappa)\cdot \diam(I_{\lambda})\right).
  \end{equation*}
  Thus, for all $\omega\in \D^{\Gamma (q) + l}$ for some
  $l\leq \gamma_{q+1}$ we have,
  \begin{eqnarray*}
    \diam\left(\Delta_{\omega}\right)= \delta_{\Gamma (q) + l} :=
    |f_{j(q)}'|\cdot \lambda^{\theta_{q+1}+l+1}/(1-\lambda).
  \end{eqnarray*}
  Moreover, for $l<\gamma_{q+1}$ we let $\hat{\Delta}_{\kappa
    \tau}:=\Delta_{\kappa \tau}$ and for $l=\gamma_{q+1}$,
  \begin{equation*}
    \hat{\Delta}_{\kappa \tau}:=f_{j(q)}\left( g_{\omega(\kappa)}\circ
    g_{\tau}\circ (g_0)^{m_{q+1}}(I_{\lambda})+n(\kappa) \cdot \diam(I_{\lambda})\right).
  \end{equation*}
  Hence, for all $\omega\in \D^{\Gamma (q) + l}$ for some
  $l<\gamma_{q+1}$ we have,
  \begin{eqnarray*}
    \diam (\hat{\Delta}_{\omega}) = \hat{\delta}_{\Gamma (q) + l} :=
    \delta_{\Gamma (q) + l},
  \end{eqnarray*}
  and for $\omega\in \D^{\Gamma (q+1)}$
  \begin{eqnarray*}
    \diam (\hat{\Delta}_{\omega}) = \hat{\delta}_{\Gamma
      (q+1)} := |f_{j(q)}'|\cdot
    \lambda^{\theta_{q+1}+\gamma_{q+1}+m_{q+1}+1}/(1-\lambda).
  \end{eqnarray*}

  It follows that for all $\eta \in \D^{\Gamma (q+1)}$
  \begin{multline} \label{Good approximant prop}
    \hat{\Delta}_{\eta}\subseteq f_{j(q)} \Biggl( \bigcup_{\omega \in
      \D^{\hat{\gamma}_{q+1}}} \biggl\{\, x \in I_{\lambda}: \\ :
    \biggl| x-\sum_{i=1}^{\hat{\gamma}_{q+1}} \omega_i \lambda^{i}
    \biggr| < 2^{-\hat{\gamma}_{q+1}\alpha} \,\biggr\}+\Z
    \diam(I_{\lambda})\Biggr).
  \end{multline}
  In this way we have defined two families of closed intervals
  $(\Delta_{\omega})_{\omega\in\Sigma_*}$ and
  $(\hat{\Delta}_{\omega})_{\omega\in\Sigma_*}$ with the
  property that for any $\omega \in \Sigma_*$ and $\eta \in \D$,
  \begin{equation*}
    \Delta_{\omega}\supseteq \hat{\Delta}_{\omega}\supseteq
    \Delta_{\omega\eta},
  \end{equation*}
  and given any $\omega \in \D^n$, $\diam(\Delta_{\omega})\leq
  \lambda^{n+1}/(1-\lambda)$. Thus, we may define a map $\pi:\Sigma
  \rightarrow I_{\lambda}$ by
  \begin{equation*}
    \pi(\omega):=\bigcap_{n\in \N}\Delta_{\omega|n}=\bigcap_{n\in
      \N}\hat{\Delta}_{\omega|n}.
  \end{equation*}
  By construction we also have $\delta_n\geq \lambda^2/2
  \hat{\delta}_{n-1}$ for all $n\in \N$.

  We let $\Lambda:=\pi\left(\Sigma\right)$. By Equations (\ref{Good
    approximant prop}) and (\ref{Infinite k}) we have
  \begin{equation*}
    \Lambda \subset \bigcap_{j\in\N} f_j\left(W_{\lambda}(\alpha)+\Z \cdot \diam(I_{\lambda})\right).
  \end{equation*}

  Thus, to complete the proof it suffices to show that $\dimH \Lambda
  \geq s$. In order to do this we shall define a measure supported on
  $\Lambda$ with the property
  \begin{equation*}
    \Cdim(\nu):=\liminf_{r\rightarrow 0}\frac{1}{\log r}\log \int
    \nu\left(B_r(x)\right) d\nu(x) \geq s.
  \end{equation*}
  That is, the correlation dimension $\Cdim(\nu)$ of $\nu$ is at least
  $s$. This implies that the Hausdorff dimension of $\nu$ and hence
  $X$ is at least $s$ (see \cite[Section 17]{Pesin}).
	
  We do this by taking $\mu$ to be the
  $\left(\frac{1}{2},\frac{1}{2}\right)$-Bernoulli measure on $\Sigma$
  and $\nu$ its projection by $\pi$, $\nu:=\mu\circ \pi^{-1}$.

  In order to estimate $\Cdim(\nu)$ we require good upper bounds on
  the number of intervals $\hat{\Delta}_{\omega}$ of a given level
  which are close to one another.

  \begin{lemma}\label{first estimate for t close}
    Suppose $\rho>1$ and $\lambda \notin A(s)$. Then there exists a
    constant $C$ depending only on $\rho$ and $\lambda$ such that for
    any pair $\eta,\zeta \in \D^{\Gamma (q)}$ for some
    $q\in \N$, $n=l + \Gamma (q)$ for some $l \leq
    \gamma_{q+1}$ and $\hat{\delta}_n \leq t \leq \rho \cdot \delta_n$
    we have,
    \begin{equation*}
      \#\left\lbrace (\kappa,\tau)\in \D^l : d
      (\hat{\Delta}_{\eta\kappa}, \hat{\Delta}_{\zeta\tau} ) <
      t\right\rbrace \leq C \cdot 2^l+4^{l}l \cdot(\rho
      \lambda)^{-s}\left(\frac{t}{\hat{\delta}_{n-l}}\right)^s.
    \end{equation*}
  \end{lemma}

  \begin{proof}
    We begin by noting that for each pair $(\kappa,\tau)\in \D^l$ we
    have
    \begin{eqnarray*}
      f_{j(q)}\circ g_{\omega(\eta)}\circ
      g_{\kappa}(0)+f_{j(q)}(n(\eta)) &\in &
      \hat{\Delta}_{\eta\kappa}\\ f_{j(q)}\circ g_{\omega(\zeta)}\circ
      g_{\tau}(0)+f_{j(q)}(n(\zeta)) &\in & \hat{\Delta}_{\zeta\tau}.
    \end{eqnarray*} 
    Since every $\hat{\Delta}_{\eta\kappa}, \hat{\Delta}_{\zeta\tau}$
    has diameter $\hat{\delta}_n$, $t\geq \hat{\delta}_n$ we have,
    \begin{multline*}
      \# \bigl\lbrace\, (\kappa,\tau)\in \D^l : d
      (\hat{\Delta}_{\eta\kappa}, \hat{\Delta}_{\zeta\tau} ) < t
      \,\bigr\rbrace \\ \leq \#\Bigl\{\, (\kappa,\tau)\in \D^l:\big|
      f_{j(q)}\circ g_{\omega(\eta)}\circ g_{\kappa}(0)- f_{j(q)}\circ
      g_{\omega(\zeta)}\circ g_{\tau}(0) + \\ +
      (f_{j(q)}(n(\eta))-f_{j(q)}(n(\zeta)))\big|< 2t \,\Bigr\}
    \end{multline*}
    Since $f_{j(q)}$ is affine we have,
    \begin{multline*}
      f_{j(q)}\circ g_{\omega(\zeta)}\circ g_{\tau}(0) \\ = \left(
      f_{j(q)}\circ g_{\omega(\eta)}\circ g_{\tau}(0)+\left(
      f_{j(q)}\circ g_{\omega(\zeta)}(0)- f_{j(q)}\circ
      g_{\omega(\eta)}(0)\right)\right).
    \end{multline*}
    By applying Lemma \ref{translation lemma} we obtain
    \begin{align*}
      &\#\Bigl\{\, (\kappa,\tau)\in \D^l:\big| f_{j(q)}\circ
      g_{\omega(\eta)}\circ g_{\kappa}(0)- f_{j(q)}\circ
      g_{\omega(\zeta)}\circ g_{\tau}(0) + \\ & \hspace{6cm}
      +(f_{j(q)}(n(\eta))-f_{j(q)}(n(\zeta)))\big|< 2t
      \,\Bigr\} \\ &\leq 4 \#\left\lbrace\, (\kappa,\tau)\in
      \D^l:\big| f_{j(q)}\circ g_{\omega(\eta)}\circ g_{\kappa}(0)-
      f_{j(q)}\circ g_{\omega(\eta)}\circ g_{\tau}(0)\big|< 2t
      \,\right\rbrace.
    \end{align*}
    We note that
    \begin{eqnarray*}
      \lVert (f_{j(q)}\circ g_{\omega(\eta)})' \rVert_{\infty}&\geq&
      \frac{1-\lambda}{\lambda}\cdot \diam\left(f_{j(q)}\circ
      g_{\omega(\kappa)}(I_{\lambda}) \right)\\ &\geq&
      \frac{1-\lambda}{\lambda}\cdot \frac{\lambda}{2}\cdot
      \diam (\hat{\Delta}_{\kappa} )\\ &=& (1-\lambda) \cdot
      \hat{\delta}_{n-l}/2.
    \end{eqnarray*}
    Piecing the above together we have
    \begin{align*}
      \# \bigl\{\, (\kappa,\tau) &\in \D^l:d
      (\hat{\Delta}_{\eta\kappa}, \hat{\Delta}_{\zeta\tau}) < t
      \,\bigr\} \\ &\leq 4\cdot \bigl\{\, (\kappa,\tau)\in \D^l :
      \big| f_{j(q)}\circ g_{\omega(\eta)}\circ g_{\kappa}(0)-
      \\ & \hspace{5.5cm} - f_{j(q)}\circ g_{\omega(\eta)}\circ
      g_{\tau}(0)\big|< 2t\,\bigr\} \\ &\leq 4\cdot \#\bigl\{
      (\kappa,\tau)\in \D^l:\big|g_{\kappa}(0)- g_{\tau}(0)\big|< 2t
      \lVert (f_{j(q)}\circ g_{\omega(\eta)})'\rVert_\infty^{-1}
      \bigr\rbrace \\&\leq 4\cdot \#\biggl\lbrace (\kappa,\tau)\in
      \D^l:\big|g_{\kappa}(0)-g_{\tau}(0)\big|<
      \frac{4t}{(1-\lambda)\hat{\delta}_{n-l}}\biggr\rbrace.
    \end{align*}
    Since $\delta_n \leq \hat{\delta}_{n-l}\cdot \lambda^l$ and $t \leq
    \rho \delta_n$ we have,
    \begin{eqnarray*}
      \frac{4t}{(1-\lambda)\hat{\delta}_{n-l}} &\leq& \frac{4
        \rho}{1-\lambda} \cdot \lambda^l.
    \end{eqnarray*}
    Now choose $k\in \N\cup\{0\}$ so that
    \begin{eqnarray*}
      \frac{4 \rho}{1-\lambda} \cdot
      \lambda^{l+k+1}<\frac{4t}{(1-\lambda)\hat{\delta}_{n-l}} \leq
      \frac{4 \rho}{1-\lambda} \cdot \lambda^{l+k}.
    \end{eqnarray*}
    By applying Lemma \ref{Constant depending on r} we have
    \begin{align*}
      \#\{\, (\kappa,\tau) &\in \D^l : d (\hat{\Delta}_{\eta\kappa},
      \hat{\Delta}_{\zeta\tau}) < t \,\} \\ &\leq 4\cdot \#\{\,
      (\kappa,\tau)\in \D^l:\big|g_{\kappa}(0)-g_{\tau}(0)\big|<4 \rho
      (1-\lambda)^{-1} \lambda^{l+k} \,\} \\ &= 4\cdot \tilde{P}_l
      \Bigl(\lambda,k,\frac{4 \rho}{1-\lambda} \Bigr)\\ &\leq C\Bigl(
      \frac{4 \rho}{1-\lambda}\Bigr) \cdot
      2^l+4^{l}l\lambda^{s(l+k)}\\ &\leq C\Bigl( \frac{4
        \rho}{1-\lambda}\Bigr) \cdot 2^l+4^{l}l \cdot(\rho
      \lambda)^{-s} \Bigl( \frac{t}{\hat{\delta}_{n-l}}
      \Bigr)^s. \qedhere
    \end{align*}
  \end{proof}
  
  \begin{lemma}\label{second estimate for r close}
    Suppose $\rho>1$ and $\lambda \notin A(s)$. Then there exists a
    constant $C$ depending only on $\rho$ and $\lambda$ such that
    given $q\in \N$ and $n=l + \Gamma (q)$ for some $l \leq
    \gamma_{q+1}$ and $\hat{\delta}_n \leq t \leq \rho \cdot \delta_n$
    we have,
    \begin{multline*}
      \# \{\, (\kappa,\tau)\in \D^n : d (\hat{\Delta}_{\kappa},
      \hat{\Delta}_{\tau}) < t \,\} \\ \leq
      4^{\Gamma (q-1)} \cdot \biggl( C \cdot
      2^{\gamma_q}+4^{\gamma_q}\gamma_q \cdot \lambda^{-s} \biggl(
      \frac{\hat{\delta}_{n-l}}{\hat{\delta}_{n-l-\gamma_q}} \biggr)^s
      \biggr)\\ \cdot \left(C \cdot 2^l+4^{l}l \cdot(\rho
      \lambda)^{-s}\left(\frac{t}{\hat{\delta}_{n-l}}\right)^s\right).
    \end{multline*}
  \end{lemma}
  
  \begin{proof}
    First note that if $\eta \in \D^{n-l}$ and $\alpha \in \D^l$,
    $\hat{\Delta}_{\eta\alpha}\subseteq \hat{\Delta}_{\eta}$. Hence,
    \begin{align*}
      & \#\left\lbrace (\kappa,\tau)\in \D^n:d (\hat{\Delta}_{\kappa},
      \hat{\Delta}_{\tau}) < t\right\rbrace \\ &=
      \sum_{(\kappa,\tau)\in \D^n}\chi_{\left\lbrace
        (\kappa',\tau')\in \D^n: d\left(\hat{\Delta}_{\kappa'},
        \hat{\Delta}_{\tau'}\right)< t\right\rbrace} \\ &=
      \sum_{(\eta,\zeta)\in \D^{n-l}}\chi_{\{ (\eta',\zeta')\in
        \D^{n-l}: d (\hat{\Delta}_{\eta'}, \hat{\Delta}_{\zeta'} )< t
        \}}\cdot \sum_{(\alpha,\beta)\in \D^l} \chi_{\{
        (\alpha',\beta')\in \D^{l}: d (\hat{\Delta}_{\eta\alpha'},
        \hat{\Delta}_{\zeta\beta'} )< t \}}\\ &= \sum_{(\eta,\zeta)\in
        \D^{n-l}}\chi_{\{ (\eta',\zeta')\in \D^{n-l}: d
        (\hat{\Delta}_{\eta'}, \hat{\Delta}_{\zeta'} ) < t \}} \cdot
      \\ & \hspace{4cm} \cdot \#\bigl\{ (\alpha,\beta)\in \D^l: d
      (\hat{\Delta}_{\eta\alpha}, \hat{\Delta}_{\zeta\beta}) < t \}.
    \end{align*}
    By applying Lemma \ref{first estimate for t close} along with the
    fact that $t\leq \rho \delta_{n}\leq \rho \hat{\delta}_{n-l}$,
    \begin{align*}
      &\# \left\lbrace (\kappa,\tau)\in \D^n:d (\hat{\Delta}_{\kappa},
      \hat{\Delta}_{\tau} )< t\right\rbrace \\ &\leq \#\left\lbrace
      (\eta,\zeta) \in \D^{n-l}:d
      (\hat{\Delta}_{\eta},\hat{\zeta}_{\tau}) < t \right\rbrace \cdot
      \left(C 2^l+4^{l}l \cdot(\rho
      \lambda)^{-s}\left(\frac{t}{\hat{\delta}_{n-l}}\right)^s\right)\\ &\leq
      \#\left\lbrace (\eta,\zeta) \in \D^{n-l}:d
      (\hat{\Delta}_{\eta},\hat{\zeta}_{\tau}) < \rho
      \hat{\delta}_{n-l} \right\rbrace \cdot \\ & \hspace{4cm} \cdot
      \left(C 2^l+4^{l}l \cdot(\rho
      \lambda)^{-s}\left(\frac{t}{\hat{\delta}_{n-l}}\right)^s\right),
    \end{align*}
    Now clearly $\rho \hat{\delta}_{n-l} \in [\hat{\delta}_{n-l},\rho
      \hat{\delta}_{n-l}]$ and so we may apply the above reasoing to
    the first term to obtain,
    \begin{align*}
      &\# \left\lbrace (\eta,\zeta) \in \D^{n-l}:d
      (\hat{\Delta}_{\eta},\hat{\zeta}_{\tau}) < \rho
      \hat{\delta}_{n-l} \right\rbrace \\ &\leq \#\left\lbrace
      (\alpha,\beta) \in \D^{n-l-\gamma_q}:d
      (\hat{\Delta}_{\alpha},\hat{\beta}_{\tau}) < \rho
      \hat{\delta}_{n-l-\gamma_q} \right\rbrace \cdot
      \\ & \hspace{4cm} \cdot \biggl( C \cdot
      2^{\gamma_q}+4^{\gamma_q}\gamma_q \cdot \lambda^{-s} \biggl(
      \frac{\hat{\delta}_{n-l}}{\hat{\delta}_{n-l-\gamma_q}} \biggr)^s
      \biggr)\\ &\leq \#\D^{2\sum_{p<q}\gamma_p} \cdot \biggl( C \cdot
      2^{\gamma_q}+4^{\gamma_q}\gamma_q \cdot \lambda^{-s} \biggl(
      \frac{\hat{\delta}_{n-l}}{\hat{\delta}_{n-l-\gamma_q}} \biggr)^s
      \biggr).
    \end{align*}
    Piecing these two inequalities together completes the proof of the
    lem\-ma.
  \end{proof}

  Recall that to complete the proof we must obtain the following
  inequality,
  \begin{equation*}
    \Cdim(\nu)=\liminf_{r\rightarrow 0}\frac{1}{\log r}\log \int
    \nu\left(B_r(x)\right) d\nu(x) \geq s.
  \end{equation*}
  Choose $r\in \left(0,\lambda/(1-\lambda)\right)$ and take $n$ to be
  the least integer satisfying $\hat{\delta}_n<r$. It follows that $r
  \leq \hat{\delta}_{n-1}< 2/\lambda^2 \cdot \delta_n$. Given
  $\kappa\in \D^n$ and a sequence $\omega$ such that $\kappa = \omega
  | n$, we have
  \begin{equation*}
    \#\lbrace\, \tau \in \D^n : \hat{\Delta}_{\tau}\cap
    B_r(\pi(\omega))\neq \emptyset \,\rbrace \leq \#\lbrace\, \tau \in
    \D^n : d (\hat{\Delta}_{\tau},\hat{\Delta}_{\kappa}) < r
    \,\rbrace.
  \end{equation*}
  Hence,
  \begin{eqnarray*}
    \nu\left(B(\pi(\omega),r)\right) \leq \#\lbrace\, \tau \in \D^n
    : d (\hat{\Delta}_{\tau},\hat{\Delta}_{\kappa}) < r \,\rbrace
    \cdot 2^{-n}.
  \end{eqnarray*}
  Since $\nu=\mu\circ \pi^{-1}$ we have,
  \begin{align*}
    \int \nu\left(B_r(x)\right) & d\nu(x) =\int
    \nu\left(B_r(\pi(\omega))\right) d\mu(\omega)\\ &\leq
    \sum_{\kappa\in \D^n} \mu([\kappa]) \bigl(\# \lbrace\, \tau \in
    \D^n : d (\hat{\Delta}_{\tau},\hat{\Delta}_{\kappa}) < r \,\rbrace
    \cdot 2^{-n}\bigr)\\ &= 4^{-n} \# \lbrace\, (\kappa,\tau) \in
    (\D^n)^2 : d (\hat{\Delta}_{\tau},\hat{\Delta}_{\kappa}) < r
    \,\rbrace.
  \end{align*}
  Now note that $\hat{\delta}_n< r\leq 2/\lambda^2 \delta_n \leq 8
  \delta_n$ so by Lemma \ref{second estimate for r close} we have,
  \begin{align} \label{integral key estimate with r involved}
    \int & \nu\left(B_r(x)\right) d\nu(x) \\ &\leq 4^{-n} \cdot
    4^{\Gamma (q-1)} \cdot \biggl( C \cdot
    2^{\gamma_q}+4^{\gamma_q}\gamma_q \cdot \lambda^{-s} \biggl(
    \frac{\hat{\delta}_{n-l}}{\hat{\delta}_{n-l-\gamma_q}} \biggr)^s
    \biggr) \cdot \nonumber \\ & \hspace{4cm} \cdot \left(C \cdot
    2^l+4^{l}l \lambda^{-s} \left( \frac{r}{\hat{\delta}_{n-l}}
    \right)^s \right) \nonumber \\ &\leq \biggl(C \cdot
    2^{-\gamma_q}+\gamma_q \cdot \lambda^{-s}
    \biggl(\frac{\hat{\delta}_{n-l}}{\hat{\delta}_{n-l-\gamma_q}}
    \biggr)^s \biggr) \cdot \nonumber \\ & \hspace{4cm} \cdot \left(C
    \cdot 2^{-l}+l \lambda^{-s} \left( \frac{r}{\hat{\delta}_{n-l}}
    \right)^s \right). \nonumber
  \end{align}
  where $q$ is chosen so that $n=l + \Gamma (q)$ and
  $0\leq l<\gamma_{q+1}$.

  Now since $m_q \geq \left(\frac{\log 2^{-\alpha}}{\log
    \lambda}-1\right)\gamma_q$,
  \begin{eqnarray*}
    \frac{\hat{\delta}_{n-l}}{\hat{\delta}_{n-l-\gamma_q}} \leq
    \lambda^{\gamma_q+m_q} \leq 2^{-\alpha \gamma_q},
  \end{eqnarray*}
  and provided $l>0$ we have
  \begin{eqnarray*}
    \frac{r}{\hat{\delta}_{n-l}} \leq
    \frac{8\delta_n}{\hat{\delta}_{n-l}} \leq \lambda^l.
  \end{eqnarray*}
  Note that $\frac{1}{2}\leq \lambda$ and since $s\leq
  \frac{1}{\alpha}$, we have $2^{-\gamma_q}\leq
  \left(2^{-\alpha\gamma_q}\right)^s$. Thus, by
  Equation~(\ref{integral key estimate with r involved}), if $l>0$ we
  have
  \begin{eqnarray} \label{integral key estimate with r removed l>0}
    \int \nu\left(B_r(x)\right) d\nu(x) &\leq & (2 C \lambda^{-s})^2
    \cdot \gamma_q \left(2^{-\alpha \gamma_q}\right)^s \cdot l
    \left(\lambda^l \right)^s,
  \end{eqnarray}
  and if $l=0$ we have,
  \begin{eqnarray} \label{integral key estimate with r removed l=0}
    \int \nu\left(B_r(x)\right) d\nu(x) &\leq & (2 C^2 \lambda^{-s})
    \cdot \gamma_q \left(2^{-\alpha \gamma_q}\right)^s.
  \end{eqnarray}

  By the inequality (\ref{m q inequalities}) we have,
  \begin{eqnarray}\label{r is not too small}
    r&>& \hat{\delta}_n \geq \hat{\delta}_{\Gamma (q)}
    \cdot \frac{ \lambda}{2} \cdot \lambda^l\\ \nonumber &\geq
    &\hat{\delta}_{\Gamma (q-1)} \left(\frac{
      \lambda}{2}\right)^2 \cdot \lambda^{\gamma_q + m_q} \cdot
    \lambda^l\\ \nonumber &\geq &\hat{\delta}_{\Gamma (q-1)}
    \left(\frac{ \lambda}{2}\right)^2 \cdot
    \lambda^{\hat{\gamma}_q+m_q} \cdot \lambda^l\\ \nonumber &\geq &
    \frac{\lambda^3}{4}\cdot\hat{\delta}_{\Gamma (q-1)} \cdot
    2^{-\alpha\gamma_q -\alpha \theta_q} \cdot \lambda^{l}.
  \end{eqnarray}
  Now by construction, for each $q\in \N$, $\gamma_{q}>q \log (
  \hat{\delta}_{\Gamma(q-1)} )^{-1} \cdot \theta_q$, so if we define
  \begin{equation*}
    \iota(q):= \frac{-\log \left(\lambda^3/ \log 4\right) - \log
      \hat{\delta}_{\Gamma (q-1)}+\theta_q\alpha \log
      2}{\gamma_q\alpha \log 2},
  \end{equation*}
  we have $\iota(q) \rightarrow 0$ as $q \rightarrow
  \infty$. Moreover, by (\ref{r is not too small}),
  \begin{equation*}
    \frac{\gamma_q\log{2}+ l\log \lambda^{-1}}{-\log r} \geq
    \frac{1}{1+\iota(q)}.
  \end{equation*}
  Substituting into Equations (\ref{integral key estimate with r
    removed l>0}) and (\ref{integral key estimate with r removed l=0})
  and noting that $q\rightarrow \infty$ as $r\rightarrow 0$ we have,
  \begin{equation*}
    \Cdim(\nu)=\liminf_{r\rightarrow 0}\frac{1}{\log r}\log \int
    \nu\left(B_r(x)\right) d\nu(x) \geq s.
  \end{equation*}
  This completes the proof of the Proposition.
\end{proof}

\section{$\beta$-shifts and a uniform lower bound} \label{sec:lowerbound}

Let $1 < \beta \leq 2$. Given a real number $x \in \R$ we let $\lfloor
x \rfloor$ and $\{x\}$ denote, respectively, the integer and
fractional parts of $x$. Consider the $\beta$-transformation $f_\beta
\colon [0,1) \to [0,1)$ defined by $x \mapsto \{\beta x \}$. Given $x
  \in [0,1]$ we let $\omega^{\beta}_n(x):=\lfloor \beta
  f_{\beta}^{n-1}(x) \rfloor$ and
\begin{equation*}
  S_{\beta}:=\closure \bigl\{\, (\omega^{\beta}_n(x))_{n \in \N}:
  x \in [0,1) \,\bigr\}.
\end{equation*}
Let $\pi_{\beta} \colon S_{\beta} \rightarrow [0,1]$ be defined by
$(\omega_n)_{n \in \N} \mapsto \sum_{n \in \N} \omega_n \beta^{-n}$,
and let $\sigma \colon S_{\beta} \rightarrow S_{\beta}$ denote the
left shift operator on $S_{\beta}$. Note that $\pi_{\beta} \circ
\sigma= f_{\beta}\circ \pi_{\beta}$. Parry proved in \cite{parry} that
the shift space $S_\beta$ can be written as
\[
S_\beta = \{\, (\omega_1, \omega_2, \ldots) \in \{0,1\}^\N : \sigma^k
(\omega_1, \omega_2, \ldots) \leq (\omega_n^\beta(1^-))_{n \in \N}
\ \forall k \,\},
\]
where $\leq$ is the lexicographical order and $\omega_n^\beta (1^-)$
denotes the limit in the product topology of $\omega_n^\beta (x)$ as
$x \to 1$. Moreover, Parry proved that $S_{\beta}$ is a subshift of
finite type if and only if the sequence $(\omega_n^\beta (1))_{n \in
  \N}$ terminates with infinitely many zeroes, and that a sequence
$(\omega_n)_{n\in\N}$ equals $(\omega_n^\beta (1))_{n\in\N}$
for some $\beta$ if and only if it satisfies
\begin{equation} \label{eq:developmentsof1}
(\omega_k, \omega_{k+1}, \ldots) < (\omega_1, \omega_2, \ldots)
\end{equation}
for all $k > 1$.  In the set of sequences satisfying
\eqref{eq:developmentsof1}, the subset of sequences terminating with
infinitely many zeroes is dense. This implies that the set of $\beta$
for which the sequence $(\omega_n^\beta(1))_{n \in \N}$ terminates
with infinitely many zeroes is dense in $(1,2)$. Hence $S_\beta$ is a
subshift of finite type for a dense set of $\beta$.

The following theorem allows us to transfer results from subshifts of
finite type to arbitrary $\beta$-shifts. It is a strengthened version
of Theorem~2 from \cite{farmperssonschmeling}, that follows immediately
by replacing Lemma~6 in \cite{farmperssonschmeling}, by Lemma~1 in
\cite{farmpersson}.

\begin{theorem}[F\"{a}rm, Persson]\label{Farm Persson finite type theorem}
  Let $\beta \in (1,2)$ and let $(\beta_n)_{n \in \N}$ be any sequence
  with $1<\beta_n < \beta$ for all $n$, such that $\beta_n \rightarrow
  \beta$ as $n \rightarrow \infty$. Suppose $E \subset S_{\beta}$ and
  $\pi_{\beta_n}\left(E \cap S_{\beta_n}\right)$ is in the class
  $\mathcal{G}^s (I)$ for all $n$. If $F$ is a $G_{\delta}$ with $F
  \supset \pi_{\beta}\left(E \cap S_{\beta}\right)$, then $F$ is also
  in the class $\mathcal{G}^s (I)$.
\end{theorem}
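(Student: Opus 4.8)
The plan is to follow the proof of Theorem~2 in \cite{farmperssonschmeling}, whose entire content is a transfer mechanism of exactly this shape, feeding it the stronger criterion supplied by Lemma~1 of \cite{farmpersson} in place of the narrower Lemma~6 of \cite{farmperssonschmeling}. The geometric input that drives the argument is twofold. First, because $\beta_n < \beta$ the expansion of $1$ is lexicographically smaller, so Parry's admissibility condition is more restrictive and the shift spaces are nested, $S_{\beta_n} \subseteq S_{\beta}$; consequently $E \cap S_{\beta_n} \subseteq E \cap S_{\beta}$ for every $n$. Second, the projections converge uniformly: for every $\omega \in \{0,1\}^{\N}$,
\[
  |\pi_{\beta_n}(\omega) - \pi_{\beta}(\omega)| \leq \sum_{k \geq 1}
  |\beta_n^{-k} - \beta^{-k}| =: \varepsilon_n,
\]
and $\varepsilon_n \to 0$ since $\beta_n \to \beta$.

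First I would combine these two observations. For any $\omega \in E \cap S_{\beta_n}$ we have $\pi_{\beta}(\omega) \in \pi_{\beta}(E \cap S_{\beta}) \subseteq F$, so $\pi_{\beta_n}(\omega)$ lies within $\varepsilon_n$ of $F$. Writing $A_n := \pi_{\beta_n}(E \cap S_{\beta_n})$, this shows
\[
  A_n \subseteq \{\, x \in I : \dist(x, F) \leq \varepsilon_n \,\},
\]
while by hypothesis each $A_n$ belongs to $\mathcal{G}^s(I)$. Thus $F$ is approximated from inside, in the Hausdorff metric, by a sequence of sets in the large intersection class $\mathcal{G}^s(I)$, with the approximation error $\varepsilon_n$ tending to zero.

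The conclusion then comes from the limit criterion. I would invoke Lemma~1 of \cite{farmpersson}, which asserts that a $G_{\delta}$ set $F$ admitting such an approximating sequence --- sets in $\mathcal{G}^s(I)$ contained in shrinking neighbourhoods of $F$ --- is itself in $\mathcal{G}^s(I)$. The mechanism behind this criterion is a net-measure (Hausdorff content) argument based on one of Falconer's equivalent characterisations of $\mathcal{G}^s$ \cite{falconernew}: membership is equivalent to the periodised set having full $t$-content in every open set for every $t<s$, and this lower bound is stable under the shrinking-neighbourhood approximation. The $G_{\delta}$ hypothesis on $F$ is essential, since every member of $\mathcal{G}^s(I)$ is $G_{\delta}$ and the content lower bounds for $F$ are assembled by intersecting, across the countably many open sets defining $F$, the full-content information carried by the $A_n$.

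The main obstacle is precisely this last transfer of content from the $A_n$ to $F$. A cover of $F \cap V$ by balls does not literally cover $A_n$; only its $\varepsilon_n$-fattening does. One must therefore control the loss incurred when balls of radius comparable to or smaller than $\varepsilon_n$ are enlarged, and show that this loss is negligible for the $t$-content with $t<s$ once $n$ is large. Carrying this out uniformly over all open sets $V$ and exponents $t<s$ is exactly the work encapsulated in Lemma~1 of \cite{farmpersson}; once that lemma is granted, the theorem follows with no further estimation, which is why the strengthening over \cite{farmperssonschmeling} is immediate.
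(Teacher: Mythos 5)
Your proposal is correct and takes essentially the same route as the paper, which gives no independent argument but obtains the theorem exactly as you do: by running the proof of Theorem~2 of \cite{farmperssonschmeling} with its Lemma~6 replaced by the stronger approximation criterion of Lemma~1 of \cite{farmpersson}. Your reconstruction of the mechanism --- the nesting $S_{\beta_n} \subseteq S_{\beta}$ via Parry's admissibility condition, the uniform bound $|\pi_{\beta_n}(\omega)-\pi_{\beta}(\omega)| \leq \varepsilon_n \to 0$ placing each $\pi_{\beta_n}(E \cap S_{\beta_n}) \in \mathcal{G}^s(I)$ in a shrinking neighbourhood of the $G_{\delta}$ set $F$, and the transfer of full Hausdorff-content bounds encapsulated in the cited lemma --- is precisely the content of the argument the paper delegates to those references.
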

For $\kappa > 0$, we consider the sets
\[
A_\beta (\kappa) = \{\, x \in [0,1] : 0 \leq T_\beta^n (x) \leq
\beta^{-\kappa n} \text{ infinitely often} \, \}.
\]
We shall use the following theorem which allows us to restrict our attention to the case where $S_{\beta}$ is a subshift of finite type.
\begin{theorem} \label{the:beta}
  For any $1 < \beta \leq 2$ we have $A_\beta (\kappa) \in
  \mathcal{G}^s ([0,1])$ for $s = \frac{1}{1+\kappa}$.
\end{theorem}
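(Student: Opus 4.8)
The plan is to reduce to the case where $S_\beta$ is a subshift of finite type and then transfer the conclusion to a general $\beta$ via Theorem \ref{Farm Persson finite type theorem}. First I would give a symbolic description of the target. Writing $x = \pi_\beta(\omega)$ for $\omega \in S_\beta$ and using $\pi_\beta \circ \sigma = T_\beta \circ \pi_\beta$, the orbit value $T_\beta^n(x) = \pi_\beta(\sigma^n \omega) = \sum_{j \ge 1}\omega_{n+j}\beta^{-j}$ lies in $[0,\beta^{-\kappa n}]$ precisely when $\sigma^n\omega$ begins with a block of zeros of length comparable to $\kappa n$; indeed a run $\omega_{n+1}=\cdots=\omega_{n+m}=0$ forces $\pi_\beta(\sigma^n\omega)\le \beta^{-m}/(\beta-1)$. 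I would therefore set
\[
E := \{\, \omega \in \{0,1\}^{\N} : \omega_{n+1}=\cdots=\omega_{n+\lceil \kappa n\rceil}=0 \text{ for infinitely many } n \,\},
\]
a condition on sequences that does not depend on $\beta$, so that $\pi_\beta(E \cap S_\beta) \subseteq A_\beta(\kappa)$, while $A_\beta(\kappa)$ is itself a $G_\delta$ set containing this projection.

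The main step is the finite-type case: if $S_\beta$ is a subshift of finite type I claim that $\pi_\beta(E \cap S_\beta)$ is a $G_\delta$ set lying in $\mathcal{G}^{s}([0,1])$ with $s=\frac{1}{1+\kappa}$. Here I would run the same kind of Cantor-set-and-measure construction as in the Proposition of Section~\ref{s P of Th 2}. Given any countable family of similarities $\{f_j\}_{j\in\N}$, I build recursively a set $\Lambda \subseteq \bigcap_j f_j\left(\pi_\beta(E\cap S_\beta)+\diam([0,1])\cdot\Z\right)$ as follows: at stage $q$, inside each current interval I use a Lemma \ref{lots of cylinders}-type argument — available because the Markov structure of the subshift and the constant slope $\beta$ of $T_\beta$ give admissible cylinders of controlled position and bounded distortion — to locate an admissible cylinder inside $f_{j(q)}^{-1}$ of that interval, and then I append a block of $\lceil \kappa N_q\rceil$ zeros, where $N_q$ is the current symbolic depth (the appended zero-blocks are always admissible in a $\beta$-shift, so they cause no obstruction). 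This guarantees that every point of $\Lambda$ satisfies the defining condition of $E$ infinitely often and hence lies in $f_j(\pi_\beta(E\cap S_\beta)+\diam([0,1])\cdot\Z)$ for every $j$. On $\Lambda$ I place the natural measure $\nu$ coming from the uniform Bernoulli measure on the free digits, and estimate $\Cdim(\nu)$ exactly as in the Proposition. The exponent $s=\frac{1}{1+\kappa}$ emerges from the book-keeping: a word with $N$ free, entropy-carrying digits followed by $\kappa N$ forced zeros codes an interval of length $\asymp \beta^{-(1+\kappa)N}$ while carrying measure $\asymp \beta^{-N}$, giving local dimension $\frac{N\log\beta}{(1+\kappa)N\log\beta}=\frac{1}{1+\kappa}$.

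Finally I would remove the finite-type hypothesis. Since the set of $\beta$ for which $S_\beta$ is a subshift of finite type is dense in $(1,2)$, I can choose an increasing sequence $\beta_n \nearrow \beta$ with each $S_{\beta_n}$ of finite type; then $S_{\beta_n}\subseteq S_\beta$ and $E\cap S_{\beta_n}\subseteq E\cap S_\beta$. By the finite-type case each $\pi_{\beta_n}(E\cap S_{\beta_n})\in\mathcal{G}^{s}([0,1])$, so taking $F = A_\beta(\kappa)$, a $G_\delta$ set with $F\supseteq \pi_\beta(E\cap S_\beta)$, Theorem \ref{Farm Persson finite type theorem} yields $A_\beta(\kappa)\in\mathcal{G}^{s}([0,1])$, as required.

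I expect the main obstacle to be the finite-type construction itself: replicating the recursive Cantor-and-measure scheme of the Proposition in the subshift-of-finite-type setting, in particular securing the analogue of Lemma \ref{lots of cylinders} with uniform control (so that admissible cylinders can always be inserted into the similarity images without the adjacency restrictions of the SFT obstructing the recursion), and verifying that appending the forced zero-blocks degrades the correlation-dimension estimate by exactly the factor that produces $\frac{1}{1+\kappa}$ rather than something smaller. A secondary technical point is confirming that $A_\beta(\kappa)$ is genuinely $G_\delta$ — handling the endpoint $0$ and the closed targets $[0,\beta^{-\kappa n}]$ — which is needed in order to apply Theorem \ref{Farm Persson finite type theorem} with $F = A_\beta(\kappa)$.
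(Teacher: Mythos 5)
Your proposal is correct and shares the paper's overall skeleton---both arguments reduce to the case where $S_\beta$ is a subshift of finite type via Theorem~\ref{Farm Persson finite type theorem} and a sequence $\beta_n \nearrow \beta$ of finite-type parameters---but the two treatments of the finite-type case are genuinely different. The paper records the cylinder estimates \eqref{eq:cylinderlength} and \eqref{eq:numberofcylinders} (lengths and counts both comparable to $\beta^{\mp n}$, so the $n$-th level approximants are evenly distributed) and then invokes the method of \cite[Example~8.9]{falconerbook}, i.e.\ it verifies Falconer's net-measure characterisation of $\mathcal{G}^s$ directly for the limsup set $A_\beta(\kappa) = \limsup_n A_{\beta,n}(\kappa)$, with no Cantor construction at all. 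You instead verify the similarity-intersection characterisation by hand, transplanting the Proposition of Section~\ref{s P of Th 2}. This works, and is in fact much lighter here than there: since the level-$n$ cylinders of a finite-type $\beta$-shift are intervals with disjoint interiors and uniformly comparable lengths, the entire overlap apparatus of Section~\ref{s P of Th 2}---the proximity numbers $P_n(\lambda,k,r)$, Lemma~\ref{translation lemma}, the Shmerkin--Solomyak transversality input and the exceptional set $A(s)$---evaporates, the count of cylinder pairs within distance $t$ at level $n$ being trivially $O(t\beta^n + 1)$; and your analogue of Lemma~\ref{lots of cylinders} is indeed available because admissible cylinders cover $[0,1]$ with lengths controlled by \eqref{eq:cylinderlength}, while appended zero-blocks are always admissible. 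Your amortisation of the forced zero-blocks (each free stretch $\gamma_{q+1}$ chosen enormous relative to the accumulated depth) does yield liminf exponent exactly $\frac{1}{1+\kappa}$, attained at the ends of the zero-blocks, matching the heuristic count of $\asymp \beta^N$ words coding intervals of length $\asymp \beta^{-(1+\kappa)N}$. In short: your route buys a self-contained proof; the paper's buys brevity by citation.

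Two fixable details, neither a genuine gap. First, a zero-run of length exactly $\lceil \kappa n \rceil$ only forces $T_\beta^n(x) \leq \beta^{-\lceil \kappa n\rceil}/(\beta-1)$, and $1/(\beta-1) \geq 1$ for $\beta \leq 2$, so your claimed inclusion $\pi_\beta(E\cap S_\beta) \subseteq A_\beta(\kappa)$ can fail by this multiplicative constant; lengthen the runs to $\lceil \kappa n\rceil + L$ with $L$ a fixed integer satisfying $\beta_1^{-L} \leq \beta_1 - 1$, which works uniformly along the approximating sequence. Second, ``uniform Bernoulli on the free digits'' is not quite the right measure in a subshift of finite type with $\asymp \beta^n \neq 2^n$ admissible words; split mass equally among admissible continuations (or use the Parry measure), which is what your exponent computation (cylinder measure $\asymp \beta^{-N}$) actually uses. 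The $G_\delta$ issues you flag are real but are shared with the paper and absorbed in the same way: $E$ is a $G_\delta$ in the symbolic space, Theorem~\ref{Farm Persson finite type theorem} only requires some $G_\delta$ set $F \supseteq \pi_\beta(E\cap S_\beta)$ at the top level, and at the finite-type levels the finitely many interval endpoints can be handled by passing to interiors in the limsup.
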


\begin{remark}
  We note that the bound $s \leq \frac{1}{1+\kappa}$ is sharp since an
  easy covering argument, using the fact that $T_{\beta}$ has
  topological entropy $\log \beta$, shows that the Hausdorff dimension
  of $A_\beta (\kappa)$ is not larger than $\frac{1}{1+\kappa}$.
\end{remark}

\begin{proof}
  We let
  \[
  A_{\beta,n} (\kappa) = \biggl\{ x : 0 \leq x - y \leq 2^{- \gamma n}
  \text{ for some } y = \sum_{k=1}^n \frac{a_k}{\beta^k},\ (a_k)_{k
    \in \N} \in S_\beta \, \biggr\},
  \]
  and note that $A_\beta (\kappa)$ can be written as $A_\beta (\kappa)
  = \limsup_{n\to \infty} A_{\beta,n} (\kappa)$.

  By Theorem \ref{Farm Persson finite type theorem} it suffices to
  prove the theorem in the special case where $S_\beta$ is a subshift
  of finite type.

  When $S_\beta$ is a subshift of finite type there are constants
  $c_1$ and $c_2$ such that
  \begin{equation} \label{eq:cylinderlength}
    c_1 \beta^{-n} \leq | \pi_\beta ([a_1, a_2, \ldots, a_n]) | \leq
    c_2 \beta^{-n}.
  \end{equation}
  This implies that the number of cylinders of size $n$, denoted by $N
  (n)$, satisfies
  \begin{equation} \label{eq:numberofcylinders}
    c_2^{-1} \beta^n \leq N(n) \leq c_1^{-1} \beta^n.
  \end{equation}
  Using these estimates we may complete the proof by following the
  method of \cite[Example~8.9]{falconerbook}.
\end{proof}

\begin{corollary}\label{cor:lowerbound}
  For any $\lambda \in \left(\frac{1}{2},1\right)$ and $\alpha>1$ we
  have $W_{\lambda}(\alpha) \in \mathcal{G}^s (I_\lambda)$ for $s =
  \frac{-\log \lambda}{\alpha \log 2}$.
\end{corollary}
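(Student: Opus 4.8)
The plan is to identify the $\lambda$-expansion approximation problem with the $\beta$-shift approximation problem of Theorem~\ref{the:beta} via the substitution $\beta = 1/\lambda$, and then to promote the resulting statement from $[0,1]$ to the full interval $I_\lambda$. First I would fix $\beta := 1/\lambda \in (1,2)$, so that $\log \beta = -\log\lambda$ and $\lambda^k = \beta^{-k}$, and choose $\kappa$ so that the exponent delivered by Theorem~\ref{the:beta} is exactly the target, $\frac{1}{1+\kappa} = \frac{\log\beta}{\alpha\log 2} = -\frac{\log\lambda}{\alpha\log 2}$, i.e. $1+\kappa = \frac{\alpha\log 2}{\log\beta}$. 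Since $\alpha>1$ forces $2^{-\alpha}<\tfrac12<\lambda$, we have $\beta < 2^{\alpha}$ and hence $\kappa>0$, so Theorem~\ref{the:beta} applies. With this choice the approximation rate matches on the nose: writing $y_n=\sum_{k=1}^n a_k\lambda^k$ with $(a_k)\in S_\beta$, the defining condition $0\le T_\beta^n(x)\le \beta^{-\kappa n}$ of $A_\beta(\kappa)=\limsup_n A_{\beta,n}(\kappa)$ reads $0\le x-y_n\le \beta^{-(1+\kappa)n}=2^{-\alpha n}$.

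Next I would establish the containment $A_\beta(\kappa)\subseteq W_\lambda(\alpha)$. Every admissible truncation $y_n=\sum_{k=1}^n a_k\lambda^k$ is a level-$n$ $\lambda$-sum because $S_\beta\subseteq\{0,1\}^{\mathbb N}$, and $[0,1]\subseteq I_\lambda$ because $\lambda>\tfrac12$. Thus a point of $A_\beta(\kappa)$ is, for infinitely many $n$, within $2^{-\alpha n}$ of a level-$n$ $\lambda$-sum and lies in $I_\lambda$, so it lies in $W_\lambda(\alpha)$. To pass from the closed inequality $\le 2^{-\alpha n}$ to the open one in the definition of $W_\lambda(\alpha)$, and to gain room for the next step, I would run the argument with any $\kappa'>\kappa$ in place of $\kappa$; this gives $A_\beta(\kappa')\subseteq W_\lambda(\alpha)$ and, by Theorem~\ref{the:beta}, $A_\beta(\kappa')\in \G^{s'}([0,1])$ with $s'=\frac1{1+\kappa'}<s$.

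The main obstacle is the \emph{domain}: since $\lambda>\tfrac12$ the interval $I_\lambda=[0,\lambda/(1-\lambda)]$ is strictly larger than $[0,1]$, whereas the $\beta$-shift only populates $[0,1]$. This gap cannot be ignored, because the naive monotonicity $\G^s([0,1])\subseteq \G^s(I_\lambda)$ is \emph{false}: periodising a set supported in $[0,1]$ by the larger period $\diam(I_\lambda)$ leaves the strip $(1,\lambda/(1-\lambda))$ entirely empty in each period, so the periodised set is not large there. By Falconer's net-content characterisation of $\G^s$ (see \cite{falconernew}), $W_\lambda(\alpha)\in\G^s(I_\lambda)$ is equivalent to $W_\lambda(\alpha)$ capturing a uniformly positive proportion of the $t$-dimensional net content of every subinterval of $I_\lambda$, for every $t<s$; I must therefore fill the upper strip $(1,\lambda/(1-\lambda))$ as well.

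To do this I would exploit the self-affine maps $g_\tau$. For a word $\tau\in\D^m$ the cylinder $g_\tau([0,1])$ lies inside $g_\tau(I_\lambda)\subseteq I_\lambda$, and a short computation shows $g_\tau\big(A_\beta(\kappa')\big)\subseteq W_\lambda(\alpha)$ whenever $\kappa'>\kappa$: the contraction by $\lambda^m$ worsens the approximation rate at level $N=m+n$ only by the bounded factor $\beta^{\kappa' m}$, which is absorbed for large $N$ since $(1+\kappa')\log\beta>\alpha\log 2$ strictly. Because $\lambda>\tfrac12$, the cylinders $g_\tau([0,1])$ are dense at every scale in $I_\lambda$, so every subinterval $B\subseteq I_\lambda$ contains some $g_\tau([0,1])$ of comparable size. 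Since $g_\tau$ is a similarity and net content is similarity-invariant, $g_\tau(A_\beta(\kappa'))\subseteq W_\lambda(\alpha)$ captures a uniformly positive proportion of the net content of $B$; letting $\kappa'\downarrow\kappa$ (so $s'\uparrow s$) yields the required largeness for every $t<s$ in every subinterval of $I_\lambda$. Consequently $W_\lambda(\alpha)+\diam(I_\lambda)\cdot\Z\in\G^s$, that is $W_\lambda(\alpha)\in\G^s(I_\lambda)$ with $s=-\frac{\log\lambda}{\alpha\log 2}$, completing the proof.
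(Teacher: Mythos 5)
Your proposal is correct and follows essentially the paper's own route: set $\beta=\lambda^{-1}$, choose $\kappa=\frac{\alpha\log 2}{\log\beta}-1$ so that $\frac{1}{1+\kappa}=s$ (your check that $\kappa>0$ is a detail the paper omits), deduce $A_\beta(\kappa)\subseteq W_\lambda(\alpha)$, and invoke Theorem~\ref{the:beta}. Your closing argument---filling the strip $I_\lambda\setminus[0,1]$ with similar copies $g_\tau(A_\beta(\kappa'))$, $\kappa'\downarrow\kappa$, and verifying largeness via Falconer's net-content characterisation---is just a correct and welcome expansion of the paper's one-line appeal to ``the self-similar structure of $W_\lambda(\alpha)$'' in passing from $\G^s([0,1])$ to $\G^s(I_\lambda)$.
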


\begin{proof}
  Take $\beta =\lambda^{-1}$ and $\kappa = \frac{\alpha \log 2}{\log
    \beta}-1$. It follows that $A_{\beta}(\kappa) \subset
  W_{\lambda}(\alpha)$, so $W_\lambda (\alpha) \in \mathcal{G}^s
  ([0,1])$ follows immediately from Theorem \ref{the:beta}. Now, the
  self-similar structure of $W_\lambda (\alpha)$ implies that
  $W_\lambda (\alpha) \in \mathcal{G}^s (I_\lambda)$.
\end{proof}

\section{Covering arguments and upper bounds}\label{s covering args}

Each of the upper bounds from Theorem \ref{main bullet points} parts
(1), (3) and (5) will rely on the following simple relationship
between the growth in the number of $n$th level $\lambda$ sums and the
dimension of $W_{\lambda}(\alpha)$. Given $\lambda \in
\left(\frac{1}{2},1\right)$ and $n \in \N$ we let
\begin{equation*}
  F_{\lambda, n} := \biggl\{\, \sum_{k=1}^n a_k \lambda^k : a_k \in
  \{0,1\} \,\biggr\},
\end{equation*}
and let 
\begin{equation*}
  \tau(\lambda):= \limsup_{n \rightarrow \infty} \frac{\log
    \#F_{\lambda,n}}{n \log 2}.
\end{equation*}

\begin{lemma}\label{tau cover lemma}
  For all $\lambda \in \left(\frac{1}{2},1\right)$ and $\alpha >1$ the
  Hausdorff dimension of $W_{\lambda}(\alpha)$ is bounded above by
  $\tau(\lambda)/\alpha$.
\end{lemma}
\begin{proof}
  This may be deduced by a standard covering argument. See for example
  the first paragraph in the proof of Jarn\'{i}k's theorem from
  \cite[Section 10.3]{falconerbook}.
\end{proof}

Our first corollary establishes Theorem \ref{main bullet points} (1).

\begin{corollary}
  For all $\lambda \in \left(\frac{1}{2},1\right)$ and $\alpha >1$ the
  Hausdorff dimension of $W_{\lambda}(\alpha)$ is bounded above by
  $1/\alpha$.
\end{corollary}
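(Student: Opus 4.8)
The plan is to invoke Lemma \ref{tau cover lemma}, which reduces the entire problem to showing that the growth exponent $\tau(\lambda)$ is bounded above by $1$ for every $\lambda \in \left(\frac{1}{2},1\right)$. Indeed, once $\tau(\lambda) \leq 1$ is established, the lemma immediately yields
\[
  \dimH W_{\lambda}(\alpha) \leq \frac{\tau(\lambda)}{\alpha} \leq \frac{1}{\alpha},
\]
which is precisely the desired conclusion. So the work all reduces to bounding $\tau(\lambda)$.

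To bound $\tau(\lambda)$, I would observe that $F_{\lambda, n}$ is, by its very definition, the image of the $2^n$-element set $\{0,1\}^n$ under the map $(a_1,\ldots,a_n) \mapsto \sum_{k=1}^n a_k \lambda^k$. Since the image of a finite set has cardinality no greater than that of its domain, this gives the crude but entirely sufficient bound $\#F_{\lambda,n} \leq 2^n$, valid for every $\lambda$ and every $n$. Substituting into the definition of $\tau$ yields
\[
  \frac{\log \#F_{\lambda,n}}{n \log 2} \leq \frac{\log 2^n}{n \log 2} = 1
\]
for all $n$, so that $\tau(\lambda) = \limsup_{n \to \infty} \frac{\log \#F_{\lambda,n}}{n \log 2} \leq 1$. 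Combining this with Lemma \ref{tau cover lemma} completes the argument.

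As this outline makes plain, there is no genuine obstacle to overcome: the only ingredient beyond Lemma \ref{tau cover lemma} is the trivial counting inequality $\#F_{\lambda,n} \leq 2^n$, and all the analytic content is already absorbed into the covering argument encapsulated by that lemma. It is worth remarking that the corollary is precisely the extreme case in which one retains only the number of admissible digit strings and discards any coincidences among the sums $\sum_{k=1}^n a_k \lambda^k$; for the special values of $\lambda$ treated in Theorem \ref{main bullet points} parts (3) and (5) such coincidences force $\#F_{\lambda,n}$ strictly below $2^n$, so that $\tau(\lambda) < 1$ and the dimension drops accordingly. That sharper analysis is deferred to the later results of this section, whereas here the uniform bound $1/\alpha$ follows from the universal estimate $\tau(\lambda) \leq 1$.
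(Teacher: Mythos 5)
Your proposal is correct and matches the paper's proof exactly: both invoke Lemma \ref{tau cover lemma} together with the trivial bound $\#F_{\lambda,n} \leq 2^n$, which gives $\tau(\lambda) \leq 1$ and hence $\dimH W_{\lambda}(\alpha) \leq 1/\alpha$. Your additional remarks on why the bound is not sharp for the $\lambda$ in parts (3) and (5) are accurate and consistent with the later arguments of the paper.
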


\begin{proof}
  This is immediate from Lemma \ref{tau cover lemma} combined with the
  fact that $\#F_{\lambda,n} \leq 2^n$ so $\tau(\lambda) \leq 1$ for
  all $\lambda \in \left(\frac{1}{2},1\right)$.
\end{proof}

Our second corollary establishes Theorem \ref{main bullet points} (3).

\begin{corollary}
  There exists a dense family $\Gamma \subset
  \left(\frac{1}{2},1\right)$ such that for all $\lambda \in \Gamma$,
  $\dim W_{\lambda}(\alpha)<1/\alpha$.
\end{corollary}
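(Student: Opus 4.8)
By Lemma~\ref{tau cover lemma} it suffices to exhibit a dense set $\Gamma \subset (\tfrac12,1)$ on which $\tau(\lambda) < 1$, since then $\dim W_\lambda(\alpha) \le \tau(\lambda)/\alpha < 1/\alpha$ for every $\lambda \in \Gamma$. The plan is to take $\Gamma$ to be the set of $\lambda \in (\tfrac12,1)$ that are roots of some nonzero polynomial with coefficients in $\{-1,0,1\}$, and to prove separately that (i) each such $\lambda$ satisfies $\tau(\lambda) < 1$, and (ii) $\Gamma$ is dense. The idea behind (i) is that a single algebraic coincidence among the $\lambda$-sums already forces subexponential growth of $\#F_{\lambda,n}$ relative to $2^n$, while (ii) rests on the fact that such coincidences occur for a dense set of $\lambda$.

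For (i), suppose $g(\lambda) = 0$ with $g \neq 0$ having coefficients in $\{-1,0,1\}$; after multiplying $g$ by a power of $x$ I may assume $g$ has no constant term, say $g(x) = \sum_{i=1}^\ell (u_i - w_i) x^i$ with $u,w \in \{0,1\}^\ell$ and $u \neq w$. Thus the two words $u,w$ produce the same $\lambda$-sum $\sum_{i=1}^\ell u_i \lambda^i = \sum_{i=1}^\ell w_i \lambda^i$. Splitting $\{1,\dots,m\ell\}$ into $m$ consecutive blocks of length $\ell$, the patterns $u$ and $w$ contribute identically on each block, so the set of block contributions has at most $2^\ell - 1$ elements; writing $F_{\lambda,m\ell}$ as an $m$-fold sumset of scaled copies of this set gives $\#F_{\lambda,m\ell} \le (2^\ell-1)^m$. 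Since $\#F_{\lambda,n}$ is nondecreasing in $n$, this forces $\tau(\lambda) \le \log(2^\ell-1)/(\ell \log 2) < 1$, which is exactly the bound needed.

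For (ii), I would fix a target $\lambda^* \in (\tfrac12,1)$ and $\epsilon > 0$ and choose $\lambda_1 \in (\tfrac12,\lambda^*)$ with $\lambda_1 > \lambda^* - \epsilon$. The inequality $\lambda_1 \ge \tfrac12$ gives $1 - \lambda_1 \le \sum_{i \ge 2} \lambda_1^i$, so $1$ admits a representation $1 = \lambda_1 + \sum_{i \ge 2} \omega_i \lambda_1^i$ with $\omega_i \in \{0,1\}$ and leading digit $1$. Truncating at level $n$ I set $q_n(x) = -1 + x + \sum_{i=2}^n \omega_i x^i$, so that $q_n(\lambda_1) = -\sum_{i>n} \omega_i \lambda_1^i \in (-\lambda_1^{n+1}/(1-\lambda_1),0]$, while $q_n'(x) \ge 1$ for all $x \ge 0$ because the linear term alone contributes $1$ and the remaining terms are nonnegative. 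Hence $q_n$ is strictly increasing with a unique real root $\rho_n$, and the mean value theorem yields $0 \le \rho_n - \lambda_1 \le -q_n(\lambda_1) < \lambda_1^{n+1}/(1-\lambda_1) \to 0$. Taking $n$ large places $\rho_n$ in $(\lambda^*-\epsilon,\lambda^*+\epsilon) \cap (\tfrac12,1)$, and since $\rho_n$ is a root of the nonzero $\{-1,0,1\}$-polynomial $q_n$ we have $\rho_n \in \Gamma$. The substantive step is this construction: the real difficulty is ensuring that a polynomial which is merely \emph{small} at $\lambda_1$ actually has a \emph{root} close to $\lambda_1$, and this is precisely what the uniform derivative bound $q_n' \ge 1$ (obtained by forcing the linear coefficient to equal $1$) guarantees. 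The counting estimate in step (i) is then routine, so combining (i) and (ii) yields the dense family $\Gamma$ and completes the proof.
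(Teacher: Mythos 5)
Your proof is correct and follows essentially the same route as the paper: reduce via Lemma \ref{tau cover lemma} to showing $\tau(\lambda)<1$, prove this by the same block/sumset count (one coincidence among level-$\ell$ sums gives $\#F_{\lambda,m\ell}\leq(2^\ell-1)^m$), and obtain density by truncating an infinite expansion of $1$ and perturbing to an exact root, where the paper uses the intermediate value theorem with the estimate $(\lambda_0+\epsilon)^k\geq\lambda_0^k+\epsilon^k$ and you instead use the mean value theorem with the uniform bound $q_n'\geq 1$. Your $\Gamma$, the set of roots in $\left(\frac{1}{2},1\right)$ of nonzero $\{-1,0,1\}$-polynomials, is a harmless enlargement of the paper's set $\{\lambda : 1=\sum_{i=1}^n\omega_i\lambda^i\}$, and your density construction in fact produces polynomials of exactly the paper's form $-1+x+\sum_{i=2}^n\omega_i x^i$ (your only slip is cosmetic: uniqueness of the root of $q_n$ holds on $[0,\infty)$, not on all of $\R$, which is all the argument needs).
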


\begin{proof}
  Our approach is based on \cite{ss}. We let $\Gamma$ denote the set
  of $\lambda \in \left(\frac{1}{2},1\right)$ such that for some
  finite word $\left(\omega_i\right)_{i=1}^{n} \in \{0,1\}^n$ we have
  $1=\sum_{i=1}^n\omega_i \lambda^i$. To see that $\Gamma$ is dense in
  $\left(\frac{1}{2},1\right)$ first fix $\lambda_0 \in
  \left(\frac{1}{2},1\right)$ and $\epsilon\in (0,1-\lambda_0)$. Then
  there exists an infinite string
  $\left(\omega_i\right)_{i=1}^{\infty} \in \{0,1\}^{\N}$ with
  $1=\sum_{i=1}^{\infty}\omega_i \lambda_0^i$. Let $k$ be the smallest
  $q$ with $\omega_q =1$ and choose $n$ so that
  $\sum_{i=1}^{n}\omega_i \lambda_0^i>1-\epsilon^k$. Then for some
  $\lambda \in \left(\lambda_0, \lambda_0+\epsilon\right)$ we have
  $\sum_{i=1}^{n}\omega_i \lambda^i=1$, so $\left(\lambda_0,
  \lambda_0+\epsilon\right)\cap \Gamma \neq \emptyset$.

  By Lemma \ref{tau cover lemma} it suffices to show $\tau(\lambda)<1$
  for all $\lambda \in \Gamma$. But if $\lambda \in \Gamma$ then for
  some finite word $\left(\omega_i\right)_{i=1}^{n} \in \{0,1\}^n$ we
  have $\lambda^{q(n+1)+1}=\sum_{i=1}^n\omega_i \lambda^{i+1+q(n+1)}$
  for all $q \in \N$. It follows that for all $q \in \N$,
  \begin{align*}
    F_{\lambda, q(n+1)}=\\
    =\biggl\{\, \textstyle \sum\limits_{i=0}^{q-1}
    \sum\limits_{j=1}^{n+1} & a_{i(n+1)+j}\lambda^{i(n+1)+j} :\\
    ( & a_{i(n+1)+1},\ldots, a_{i(n+1)+(n+1)} ) \in
    \{0,1\}^{n+1} \,\biggr\}\\ =\biggl\{\, \textstyle \sum\limits_{i=0}^{q-1}
    \sum\limits_{j=1}^{n+1} & a_{i(n+1)+j}\lambda^{i(n+1)+j} :\\
    ( & a_{i(n+1)+1},\ldots, a_{i(n+1)+(n+1)} ) \in
    \{0,1\}^{n+1}\backslash\{(1,0,\ldots,0)\} \,\biggr\}.
  \end{align*}
  Thus, for each $q$ we have 
  \begin{equation*}
    \#F_{q(n+1)} \leq \left(2^{n+1}-1\right)^q,
  \end{equation*}
  so for all $l \in \N$, 
  \begin{equation*}
    \#F_{l, \lambda} \leq \#F_{\lceil l/(n+1) \rceil (n+1)} \leq
    \left(2^{n+1}-1\right)^{\lceil l/(n+1) \rceil}.
  \end{equation*}
  Thus, $\tau(\lambda) \leq \log \left(2^{n+1}-1\right)/(n+1) \log 2<1$.
\end{proof}

Finally we complete the proof of Theorem \ref{main bullet points} (5).

\begin{defn}
  A \textit{multinacci number} is a postive real $\lambda$ which
  satisfies an equation of the form $\lambda^m+\cdots+\lambda=1$ for
  some $m \in \N$.
\end{defn}
We note that there are countably many multinacci numbers, all of which
are contained within the interval $\left( \frac{1}{2}, 1\right)$. The
largest multinacci number is the golden ratio $\frac{\sqrt{5}-1}{2}$.

\begin{theorem}
  Let $\lambda$ be a multinacci number. Then the Hausdorff dimension
  of $W_\lambda (\alpha)$ is $- \frac{\log \lambda}{\log 2}
  \frac{1}{\alpha}$.
\end{theorem}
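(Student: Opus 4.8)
The plan is to establish the two matching bounds separately, the lower bound being essentially free and the upper bound requiring the arithmetic of the multinacci number. For the lower bound, Corollary \ref{cor:lowerbound} already gives $W_{\lambda}(\alpha) \in \mathcal{G}^{s}(I_{\lambda})$ with $s = -\frac{\log\lambda}{\alpha\log 2}$; since membership in $\mathcal{G}^{s}(I_{\lambda})$ forces $\dimH W_{\lambda}(\alpha) \geq s$ (take all the similarities to be the identity in the definition of $\mathcal{G}^{s}$, and note that adjoining the translates $\diam(I_\lambda)\cdot\Z$ leaves the dimension unchanged), we obtain $\dimH W_{\lambda}(\alpha) \geq -\frac{\log\lambda}{\alpha\log 2}$. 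For the upper bound, Lemma \ref{tau cover lemma} reduces everything to showing $\tau(\lambda) \leq -\frac{\log\lambda}{\log 2}$; this yields $\dimH W_{\lambda}(\alpha) \leq \tau(\lambda)/\alpha \leq -\frac{\log\lambda}{\alpha\log 2}$ and closes the gap.

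Everything therefore comes down to controlling the growth of $\#F_{\lambda,n}$, and I would prove that $\#F_{\lambda,n} = \Ordo{\lambda^{-n}}$. The starting observation is that, writing $\beta := 1/\lambda$, the multinacci relation $\lambda + \lambda^{2} + \dots + \lambda^{m} = 1$ becomes, after multiplication by $\beta^{m}$, the identity $\beta^{m} = \beta^{m-1} + \dots + \beta + 1$. Thus $\beta$ is a root of the monic integer polynomial $x^{m} - x^{m-1} - \dots - x - 1$, whose remaining roots are classically known to lie strictly inside the unit disc; hence $\beta$ is a Pisot number. This is the one external input, and it is precisely what singles out the multinacci values.

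The key estimate is then a Garsia-type separation statement for $F_{\lambda,n}$. Given two distinct elements $\sum_{k} \omega_k\lambda^{k}$ and $\sum_{k}\kappa_k\lambda^{k}$ of $F_{\lambda,n}$, their difference equals $\lambda^{n} P(\beta)$, where $P(x) = \sum_{k=1}^{n}(\omega_k - \kappa_k)x^{n-k}$ has coefficients in $\{-1,0,1\}$ and $P(\beta) \neq 0$ (the two sums being distinct). I would bound $|P(\beta)|$ from below by forming the product $\prod_{i} P(\beta_i)$ over the Galois conjugates $\beta = \beta_1, \beta_2, \dots, \beta_d$ of $\beta$: this product is a nonzero rational integer, so its modulus is at least $1$, while each conjugate factor satisfies $|P(\beta_i)| \leq \sum_{j\geq 0}|\beta_i|^{j} \leq (1 - |\beta_i|)^{-1}$ because $|\beta_i| < 1$ for $i \geq 2$. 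Consequently $|P(\beta)| \geq c$ for a constant $c = c(\beta) > 0$ independent of $n$, so distinct elements of $F_{\lambda,n}$ are separated by at least $c\lambda^{n}$. Since $F_{\lambda,n} \subset I_{\lambda} = [0,\lambda/(1-\lambda)]$, an interval of fixed length, this forces $\#F_{\lambda,n} \leq c^{-1}(1-\lambda)^{-1}\lambda^{-n+1} + 1 = \Ordo{\lambda^{-n}}$, whence $\tau(\lambda) = \limsup_{n} \frac{\log \#F_{\lambda,n}}{n\log 2} \leq \frac{-\log\lambda}{\log 2}$, as required.

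I expect the main obstacle to be the separation estimate: verifying that $\beta = 1/\lambda$ genuinely is Pisot and deploying the product-of-conjugates argument cleanly. Once the lower bound $|P(\beta)| \geq c$ is in hand, the counting and the appeal to Lemma \ref{tau cover lemma} are routine, and one need only take a little care that the conclusion of Corollary \ref{cor:lowerbound} is converted into a genuine statement about $\dimH W_{\lambda}(\alpha)$ rather than merely about membership in $\mathcal{G}^{s}(I_{\lambda})$.
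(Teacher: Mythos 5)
Your proposal is correct, but the heart of it --- the estimate $\#F_{\lambda,n} = \Ordo{\lambda^{-n}}$ --- is reached by a genuinely different route from the paper's. Both arguments share the same frame: the lower bound comes from Corollary~\ref{cor:lowerbound} (and you are right that membership in $\G^{s}(I_\lambda)$ yields $\dimH W_\lambda(\alpha) \geq s$, by taking every $f_j$ to be the identity in the characterisation of $\G^s$ and using countable stability of Hausdorff dimension over the translates), and the upper bound reduces via Lemma~\ref{tau cover lemma} to showing $\tau(\lambda) \leq -\frac{\log\lambda}{\log 2}$. The paper gets this count combinatorially: the multinacci relation yields the substitution identity $S_1\circ S_2^m = S_2\circ S_1^m$, i.e.\ the words $0\underbrace{1\cdots 1}_{m}$ and $1\underbrace{0\cdots 0}_{m}$ represent the same point, so $F_{\lambda,n}$ is parametrized by a subshift of finite type with that word forbidden, and an explicit Perron--Frobenius computation shows the adjacency matrix has spectral radius $\lambda^{-1}$. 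You instead invoke the Pisot property of $\beta = 1/\lambda$ and run Garsia's separation argument on the product of conjugates, concluding that distinct points of $F_{\lambda,n}$ are $c\lambda^{n}$-separated. Your route is shorter (it bypasses the eigenvector computation that occupies most of the paper's proof), gives more information (uniform separation of $F_{\lambda,n}$, not merely a cardinality bound), and proves a stronger theorem: it establishes $\dimH W_\lambda(\alpha) = -\frac{\log\lambda}{\log 2}\frac{1}{\alpha}$ for \emph{every} $\lambda \in \left(\frac{1}{2},1\right)$ whose reciprocal is Pisot, a countable family strictly containing the multinacci numbers; the price is the external input that the non-dominant roots of $x^m - x^{m-1} - \cdots - x - 1$ lie in the open unit disc (classical, e.g.\ via the factorisation $(x-1)(x^m - x^{m-1}-\cdots-1) = x^{m+1}-2x^m+1$; note irreducibility is never actually needed, since the Galois conjugates are among these roots). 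One small point to tighten: the nonvanishing of $\prod_i P(\beta_i)$ does not follow solely from $P(\beta)\neq 0$ as you wrote --- you should add that if $P(\beta_i)=0$ for some conjugate $\beta_i$, then the minimal polynomial of $\beta$ (which is also that of $\beta_i$) would divide $P$, forcing $P(\beta)=0$, a contradiction. With that line inserted, integrality (the product is the resultant of $P$ with the monic minimal polynomial), the bound $|P(\beta_i)| \leq (1-|\beta_i|)^{-1}$ for $i \geq 2$, and $\bigl|\prod_i P(\beta_i)\bigr| \geq 1$ give $|P(\beta)| \geq \prod_{i\geq 2}(1-|\beta_i|) > 0$ exactly as you intend, and the rest of your counting and the appeals to Lemma~\ref{tau cover lemma} and Corollary~\ref{cor:lowerbound} are sound.
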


\begin{proof}
  Put
  \begin{align*}
    S_1 \colon x &\mapsto \lambda x, \\
    S_2 \colon x &\mapsto \lambda (x + 1).
  \end{align*}
  Let us first consider the case $m = 2$. Then $\lambda =
  \frac{\sqrt{5} - 1}{2}$ and $S_1 \circ S_2 \circ S_2 = S_2 \circ S_1
  \circ S_1$. Hence, when defining $W_\lambda (\alpha)$ we need only
  consider sequences where the word $011$ is forbidden, since
  replacing the word $011$ in a sequence by the word $100$, yields the
  same point. Hence, if we put
  \[
  F_{\lambda, n} = \biggl\{\, \sum_{k=1}^n a_k \lambda^k : a_k \in
  \{0,1\} \,\biggr\},
  \]
  then we have
  \[
  F_{\lambda, n} = \biggl\{\, \sum_{k=1}^n a_k \lambda^k : a_k \in
  \{0,1\},\ (a_k, a_{k+1}, a_{k+2}) \neq (0,1,1) \,\biggr\}.
  \]

  The subshift in which $011$ is forbidden is a subshift of finite
  type, with adjacency matrix
  \[
  A = \left[ \begin{array}{cccc} 1 & 1 & 0 & 0 \\ 0 & 0 & 1 & 0 \\ 1 &
      1 & 0 & 0 \\ 0 & 0 & 1 & 1 \end{array} \right].
  \]
  One checks that $\lambda^{-1} = \frac{\sqrt{5}+1}{2}$ is the largest
  eigenvalue of $A$. Hence there is a constant $K$ such that $\#
  F_{\lambda, n} < K \lambda^{-n}$. Hence $\tau(\lambda)= -\frac{\log
    \lambda}{\log 2}$, so by Lemma \ref{tau cover lemma} the Hausdorff
  dimension of $W_{\lambda} (\alpha)$ is at most $- \frac{\log
    \lambda}{\log 2} \frac{1}{\alpha}$. But by
  Corollary~\ref{cor:lowerbound} it is at least $- \frac{\log
    \lambda}{\log 2} \frac{1}{\alpha}$.

  For a general $m \geq 2$ we proceed similarly. Assume $\lambda$ is
  such that $S_1 \circ S_2^m = S_2 \circ S_1^m$. This implies that
  $\lambda$ satisfies the equation
  \begin{equation} \label{eq:multinacci}
    \lambda^m + \lambda^{m-1} + \cdots + \lambda = 1.
  \end{equation}
  As before, for the set $F_{\lambda, n}$, we need only consider
  sequences where the word
  \[
  0 \underbrace{11 \ldots 1}_{m}
  \]
  is forbidden. This is again a subshift of finite type, and it can
  be represented using a $2^m \times 2^m$ adjacency matrix given by
  \[
  A = \left[ \begin{array}{ccccccc} 1 & 1 & & \\ & & 1 & 1 \\ & & & &
      \ddots \\ & & & & & 1 & 0 \\ 1 & 1 & & \\ & & 1 & 1 \\ & & & &
      \ddots \\ & & & & & 1 & 1 \end{array} \right].
  \]
  By the Perron--Frobenius theorem, the eigenvalue of largest modulus
  of this matrix, is a positive number, and it has a corresponding
  eigenvalue with positive elements. Let $v =
  [v_1\ \cdots\ v_{2^m}]^\mathrm{T}$ be such an eigenvector and let
  $\mu$ be the eigenvalue. It is not hard to see that the equation $A
  v = \mu v$ implies that
  \[
  \begin{array}{rcl} v_1 & = & v_{2^{m-1}+1}, \\
    v_2 & = & v_{2^{m-1}+2}, \\ & \vdots & \\ v_{2^{m-1}-1} & = &
    v_{2^m - 1}. \end{array}
  \]
  Let $1 \leq k < 2^{m-2}-1$. Looking at row $k$ and row $2^{m-2}+k$
  in the equation $Av = \mu v$, we see that $v_k =
  v_{2^{m-2}+k}$. Continuing in this fashion we end up in the
  conclusion that all $v_k$ for odd $k$ are equal. Without loss of
  generality we can therefore assume that $v_k = 1$ for odd $k$.

  If we look at the first row of the matrixes in the equation $A v =
  \mu v$, we see that $\mu v_1 = v_1 + v_2 = 1 + v_2$. We continue,
  and looking at the second row, we see that $\mu v_2 = v_3 + v_4 = 1
  + v_4$. Hence we have
  \[
  \mu = \mu v_1 = 1 + v_2 = 1 + \mu^{-1} (1 + v_4).
  \]
  Similarly we get $\mu v_4 = 1 + v_8$, and so
  \[
  \mu = 1 + \mu^{-1} + \mu^{-2} (1 + v_8).
  \]
  We can continue this process, using the equations
  \[
  \mu v_{2^k} = 1 + v_{2^{k+1}},
  \]
  that are valid for $0 \leq k \leq m-3$, to conclude
  \[
  \mu = 1 + \mu^{-1} + \cdots + \mu^{-m+2} (1 +
  v_{2^{m-2}}).
  \]
  But we have $\mu v_{2^{m-2}} = v_{2^{m-1}-1} = 1$, hence
  \[
  \mu = 1 + \mu^{-1} + \cdots + \mu^{-m+2} +
  \mu^{-m+1},
  \]
  or equivalently
  \[
  \mu^m = 1 + \mu + \cdots + \mu^{m-1}.
  \]
  Comparing with the equation \eqref{eq:multinacci}, this implies
  that we have $\mu = \lambda^{-1}$.
  
  The rest is just as for the case $m = 2$ above. We have that $\#
  F_{\lambda, n} < K \mu^n = K \lambda^{-n}$, and therefore
  $\tau(\lambda)= -\frac{\log \lambda}{\log 2}$, so by Lemma \ref{tau
    cover lemma} the Hausdorff dimension of $W_{\lambda} (\alpha)$ is
  at most $- \frac{\log \lambda}{\log 2} \frac{1}{\alpha}$.
\end{proof}

\end{document}